\title{Mirkovic-Vilonen cycles and polytopes for
a symmetric pair}
\author{Jiuzu Hong }
\address{Address: Academy of Mathematics and Systems Science, Chinese Academy of Sciences, Beijing 100080, China}
\address{Current Address: School of Mathematical Sciences, Tel Aviv University, Ramat Aviv, Tel Aviv 69978, Israel.}
\email{hjzzjh@gmail.com}
\newtheorem{thm}{Theorem}[section]
\newtheorem{specialthm}{Theorem}
\newtheorem{cor}[thm]{Corollary}
\newtheorem{lem}[thm]{Lemma}
\newtheorem{prop}[thm]{Proposition}
\theoremstyle{definition}
\theoremstyle{remark}
\newtheorem{rem}{Remark}[section]
\newcommand{\C}{\mathbb{C}}
\newcommand{\G}{\mathcal{G}}
\newcommand{\K}{\mathcal{K}}
\newcommand{\st}{\sigma}
\newcommand{\R}{\mathbb{R}}
\newcommand{\N}{\mathbb{N}}
\newcommand{\Z}{\mathbb{Z}}
\newcommand{\D}{\cdot}
\begin{document}

\maketitle

\markboth {Jiuzu Hong}{Mirkovic-Vilonen cycles and polytopes for a
Symmetric pair}
\begin{abstract}
Let $G$ be a connected, simply-connected, and almost simple
algebraic group, and let $\st$ be a Dynkin automorphism on $G$. Then
$(G,G^\st)$ is a symmetric pair. In this paper, we get a bijection
between the set of  $\st$-invariant MV cycles (polytopes) for $G$
and the set of MV  cycles (polytopes) for $G^\st$, which is the
fixed point subgroup of $G$; moreover, this bijection can be
restricted to the set of MV cycles (polytopes) in irreducible
representations. As an application, we obtain a new proof of the
twining character formula.
\end{abstract}

\section{Introduction}

Let $G$ be a connected semisimple algebraic group over $\C$, and let
$\G$ be the affine Grassmannian of $G$. Let $\G_\lambda$ be the
$G(\C[[t]])$-orbit on $\G$ corresponding to a dominant coweight
$\lambda$ on $G$. Let $IC_\lambda$ be the spherical perverse sheaf
supported on $\overline{\G_\lambda}$. V.Ginzburg \cite{G} and
Mirkovi\'{c}-Vilonen \cite{MV} set up the geometric Satake
correspondence, which says that the category of spherical perverse
sheaves on $\G$ is equivalent to the category of finite dimensional
representations of the Langlands dual group $G^\vee$ of $G$; in
particular, the irreducible representation $V(\lambda)$ of $G^\vee$
with highest weight $\lambda$ is identified with the cohomology
group $H^{*}(\G,IC_\lambda)$. Furthermore, Mirkovi\'{c} and Vilonen
\cite{MV} discovered Mirkovi\'{c}-Vilonen cycles which affords a
natural basis of $V(\lambda)$.

In \cite{A}, Anderson studied the moment polytopes of
Mirkovi\'{c}-Vilonen cycles, which are called Mirkovi\'{c}-Vilonen
polytopes, and showed that these polytopes could be used to
understand the combinatorics of representations of $G^\vee$. In
\cite{K1}, Kamnitzer gave an explicit combinatorial description of
the MV cycles and polytopes. He showed that canonical basis and MV
cycles are governed by the same combinatorics, i.e MV cycles
$\longleftrightarrow$ MV polytopes $\longleftrightarrow$ canonical
basis, are bijections.

Let $\sigma$ be a nontrivial Dynkin automorphism of $G$. We have a
Dynkin automorphism on $G^\vee$ induced from $\st$. Let $G^\st$ be
the identity component of fixed point group of $\st$ on $G$. Let
$\lambda$ be a $\sigma$-invariant dominant coweight of $G$, which
can also be viewed as a dominant coweight of $G^\st$. let
$V(\lambda)$ be the irreducible representation of $G^\vee$ with
highest weight $\lambda$. We have a natural action of $\sigma$ on
$V(\lambda)$ induced from the action of the automorphism on
$G^\vee$, which fixes the highest weight vector in $V(\lambda)$. For
a $\sigma$-invariant coweight $\mu$ for $G$, $\sigma$ acts on the
weight space $V_\mu(\lambda)$. The twining character is defined to
be $\sum_{\st(\mu)=\mu}
\text{\rm{trace}}(\st|_{V_\mu(\lambda)})e^\mu$. It is related to the
character of the irreducible representation of $(G^\st)^\vee$ with
highest weight $\lambda$ through the twining character formula,
which is attributed to Jantzen \cite{J} under the name of Jantzen
theorem in \cite{KLP}. Though there are many proofs in the
literature ( for example \cite{J}, \cite{N}, \cite{KLP} ), it seems
that there is no satisfactory explanation for why Langlands dual
appears in this formula.

In this paper, we consider the action of $\sigma$ on MV cycles and
MV polytopes. The main result of the paper is to give an explicit
bijection between $\st$-invariant MV cycles (polytopes) for $G$ to
MV cycles (polytopes) for $G^\st$. In terms of polytopes, it sends
$\st$-invariant MV polytopes $P$ for $G$, to $P^\st$, which is a MV
polytope for $G^\st$. The bijection can be restricted to MV cyles
(polytopes) in irreducible representation space.

In this paper, we also show that the automorphism on $G^\vee$ from
Tannakian formalism is a Dynkin automorphism. On $V(\lambda)$, there
are two actions of $\st$, where one is induced from $G^\vee$, and
the other one is induced from the action of $\st$ on MV cycles. We
show that both of them agree, then we get a new proof of twining
character formula through geometric
Satake correspondence. \\

I would like to thank Professor T.Tanisaki for his contributions to
the paper, including ideas, discussions and useful comments; I also
appreciate his careful reading and enormous help on the improvement
of writing. I am very indebted to Professor N.Xi for his support and
encouragement on my mathematics study, and also much help to the
paper. Finally, I would like to thank Professor G.Lusztig for some
beneficial conversations during his stay in China in July, 2007.

The paper was written during the author's visit to Hebrew University
in Jerusalem under the support of \textsf{Marie Curie Research
Training Network}. During the writing of this paper, I heard that
Professor S.Naito and D.Sagaki have  given a closely related result
almost at the same time \cite{NS1} \cite{NS2}.

I would like to thank the referee for very helpful comments.
\section{Dynkin automorphism}\label{section 2}
\subsection{Notations}\label{sectioin 2.1}
Let $G$ be a connected, simply-connected and almost simple algebraic
group of rank $\ell$ over $\C$. Let $T$ be a maximal torus of $G$
and let $X^*=\text{\rm{Hom}}(T,\C^\times)$,
$X_*=\text{\rm{Hom}}(\C^\times,T)$ denote the weight and coweight
lattices of $T$. Then we have a natural perfect pairing
$\langle,\rangle:X_* \times X^*\rightarrow \mathbb{Z}$. Let
$W=N(T)/T$ denote the Weyl group.

Let $I=\{1,\cdots,l\}$ denote vertices of the Dynkin diagram of $G$.
Let $B$ be a Borel subgroup of $G$ containing $T$. Let $\alpha_1,
\alpha_2,\cdots,\alpha_l$ and $\alpha_1 ^\vee, \alpha_2
^\vee,\cdots,\alpha_l^\vee$ be simple roots and simple coroots of
$G$ with respect to $B$, respectively. Then $a_{ij}=\langle \alpha_i
^\vee, \alpha_j\rangle$ is the entry of the Cartan matrix of $G$.
Note that $(X_*,X^*,\langle,\rangle,\alpha_i ^\vee,\alpha_i;i\in I)$
is the root datum of $G$. Let $\lambda_1,\cdots,\lambda_l$ $\in
X^*\otimes \mathbb{R}$ be fundamental weights.

For $i \in I$, let $x_i:\C\rightarrow G$ and $y_i:\C\rightarrow G$
be root homomorphisms (corresponding to $\alpha_i$ and $-\alpha_i$,
respectively) which together with $T$, $B$ form a pinning of $G$.

Let $s_1, \cdots, s_{\ell} \in W$ be the set of simple reflections.
Let $w_0$ be the longest element of $W$, and let $m$ be its length.

We use $\geq$ for the usual partial order on $X_*$, so that $\mu\geq
\nu$ if and only if $\mu-\nu$ is a sum of positive coroots. More
generally, for each $w\in W$, we have the twisted partial order
$\geq_w$ on $X_*$, where $\mu \geq_w \nu$ if and only if
$w^{-1}\D\mu \geq w^{-1}\D\nu$.

A reduced word for an element $w\in W$ is a sequence of indices
$\bold{i}=(i_1,\cdots,i_k)$ $\in I^k$ such that $w=s_{i_1}\D
s_{i_2}\cdots s_{i_k}$ is a reduced expression. In this paper, a
reduced word will always mean a reduced word for $w_0$, where $w_0$
is the longest element in $W$.
\subsection{Group structure of $G^\st$}\label{section 2.2}

Let $\sigma:I\to I$ be a nontrivial bijection, satisfying
$a_{\sigma(i)\sigma(j)}=a_{ij}$ for all $i, j\in I$. We assume that
there are automorphisms $\sigma:X^*\to X^*$ and $\sigma:X_*\to X_*$
of ${\bf Z}$-modules satisfying
$\sigma(\alpha_i)=\alpha_{\sigma(i)}$ and
$\sigma(\alpha^\vee_i)=\alpha^\vee_{\sigma(i)}$ for any $i\in I$.
Then $\sigma$ induces an automorphism $\sigma:G\to G$ of algebraic
groups, such that $\st(x_i(a))=x_{\st(i)}(a)$ and
$\st(y_i(a))=y_{\st(i)}(a)$ $(\forall\text{ } i\in I)$. We call
$\st$ a \textsf{Dynkin automorphism} on $G$. In particular, we have
$\st(B)=B$ and $\st(T)=T$.

Let $G^\st$ be the fixed point group of $\st$ on $G$, and let
$T^\st$ and $B^\st$ be the fixed point groups of $T$ and $B$,
respectively. Then $G^\st$, $B^\st$ and $T^\st$ are connected,
moreover $G^\st$ is almost simple algebraic group, under our
assumptions on $G$, see \cite{S}. We call $(G,G^\st)$ a symmetric
pair.

We set $X_* ^\st=\{\lambda \in X_*| \st(\lambda)=\lambda\}$, and
$X^*_\st=\text{\rm{Hom}}(X_* ^\st,\Z)$. We have a perfect pairing
$X_* ^\st \times X^*_\st \rightarrow \mathbb{Z}$ denoted again by
$\langle,\rangle$. Let $I_\st$ be the set of $\st$-orbits on $I$.

For any $\eta \in I_\st$, let $\alpha_\eta ^\vee=2^h\sum_{i\in \eta}
\alpha_i ^\vee  \in X_* ^\st$, where $h$ is the number of unordered
pairs $(i,j)$ such that $i,j\in \eta$, $\alpha_i +\alpha_j \in
\Phi$. Note that $h=1$, if $\eta=\{i,j\}$ and $a_{ij}=-1$; $h=0$,
otherwise. Let $\theta:X^*\otimes\R\rightarrow X^*_\st\otimes \R$ be
the natural surjection induced from the perfect pairing $\langle,
\rangle:X_* \times X^*\rightarrow \Z$. Set
$\alpha_\eta=\theta(\alpha_i)$, and $\lambda_\eta=\frac{1}{h}
\theta(\lambda_i)$, where $i$ is any element of $\eta$. We have the
following proposition (see \cite{KLP}, \cite{J}).
\begin{prop}
  $( X_* ^\st,X^*_\st, \alpha_\eta ^\vee, \alpha_\eta)$ is a root
  datum of $G^\st$.
\end{prop}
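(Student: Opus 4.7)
The plan is to identify the root datum of $G^\st$ directly (via the structure of the $\st$-fixed subgroup and its maximal torus $T^\st$) and then check, slot by slot, that the proposed tuple matches.

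First, I would identify the ambient lattices. Since $T^\st$ is a connected subtorus of $T$ and $\st$ acts on $X_*$ by a lattice automorphism, the cocharacter lattice of $T^\st$ is the fixed sublattice $X_*^\st$. Dually, the restriction-of-characters map $X^*\to X^*(T^\st)$ factors through the surjection $\theta$ and identifies $X^*(T^\st)$ with $X^*_\st=\text{Hom}(X_*^\st,\Z)$; the natural pairing descends to the pairing in the statement. Thus the first two slots of the proposed root datum are correct.

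Second, I would identify the simple coroots of $G^\st$ with the $\alpha_\eta^\vee$ by a case analysis on each $\st$-orbit $\eta\subset I$. When $h=0$ the simple root subgroups $U_{\pm\alpha_i}$ for $i\in\eta$ commute and generate a subgroup isomorphic to $\prod_{i\in\eta}SL_2$, permuted by $\st$; its $\st$-fixed subgroup is a diagonal $SL_2\subset G^\st$ whose coroot is exactly $\sum_{i\in\eta}\alpha_i^\vee=\alpha_\eta^\vee$. The subtle case is $\eta=\{i,j\}$ with $a_{ij}=-1$, which arises only for the $\Z/2$-involution of type $A_{2n}$: the subgroup generated by $U_{\pm\alpha_i},U_{\pm\alpha_j}$ is an $SL_3$, and its $\st$-fixed subgroup is $SO_3\cong PGL_2$; a direct matrix computation inside this $SL_3^\st$ shows that its coroot, mapped into $T^\st$, equals $2(\alpha_i^\vee+\alpha_j^\vee)$, which is $\alpha_\eta^\vee$ with $h=1$. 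A parallel verification shows that $\alpha_\eta=\theta(\alpha_i)$ is the weight of $T^\st$ on the corresponding root subgroup, and that $\lambda_\eta$ is (with the stated normalization) the fundamental weight dual to $\alpha_\eta^\vee$ under the pairing.

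Third, I would verify the root-datum axioms by computing $\langle\alpha_\eta^\vee,\alpha_\zeta\rangle$ from the explicit formulas for $\alpha_\eta^\vee$ and $\alpha_\zeta=\theta(\alpha_j)$ $(j\in\zeta)$ and comparing with the Cartan matrix of $G^\st$ predicted by the classification of folded Dynkin diagrams (cf.\ \cite{S}). Each comparison reduces to a rank-two calculation involving a pair of neighboring orbits, which is routine once the normalization in the $A_{2n}$ orbit is settled.

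The main obstacle is precisely the $A_{2n}$ case: the factor $2^h=2$ is not produced by naive $\st$-averaging, but by the fact that the fixed subgroup of the relevant $SL_3$ is $SO_3$, whose coroot is twice the sum of the two $SL_3$-coroots when expressed in $X_*^\st$. Once this normalization is pinned down, the remaining foldings ($A_{2n+1}$, $D_{n+1}$, $E_6$, and the triality of $D_4$) involve only $h=0$ orbits and follow immediately from the commuting $SL_2$-picture, in agreement with the standard accounts in \cite{KLP} and \cite{J}.
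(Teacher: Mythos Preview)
The paper does not actually prove this proposition: it is stated with the parenthetical ``(see \cite{KLP}, \cite{J})'' and no argument is given. So there is no in-paper proof to compare against; the result is imported from the literature.

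Your sketch is a correct and essentially standard route to the result, and it lines up with how the cited references treat it. The identification of $X_*(T^\st)$ with $X_*^\st$ and of $X^*(T^\st)$ with $\mathrm{Hom}(X_*^\st,\Z)$ is exactly right for a connected subtorus, and your orbit-by-orbit description of the rank-one subgroups of $G^\st$ is the standard mechanism. The delicate point you isolate---that for the unique ``bad'' orbit $\eta=\{i,j\}$ with $a_{ij}=-1$ (occurring only in type $A_{2n}$) the fixed subgroup of the associated $SL_3$ is $PGL_2$, whose coroot in $X_*^\st$ is $2(\alpha_i^\vee+\alpha_j^\vee)$---is precisely the reason for the factor $2^h$ in the paper's definition of $\alpha_\eta^\vee$, and your computation of it is correct. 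One small wording caveat: it is cleaner to say that the coroot of $PGL_2$ is twice the generator of its cocharacter lattice (here $\alpha_i^\vee+\alpha_j^\vee$), rather than ``twice the sum of the two $SL_3$-coroots,'' which could be misread; but the content is right.
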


 Define $x_\eta = \prod_{i\in \eta} x_i: \C \rightarrow G^\st$, by
$x_\eta(a)=\prod_{i\in \eta} x_i(a)$, if $\eta$ has only one
element, or $\forall$ $ i,j \in \eta$, with $i\neq j$, $a_{ij}=0$;
define $x_\eta:\C\rightarrow G^\st$, by
$x_\eta(a)=x_i(a)x_{j}(2a)x_i(a)$, if $\eta=\{i,j\}$, $a_{ij}=-1$.
We have the following lemma, see \cite{L2}.
\begin{lem}
  Let $x_1$, $x_2$ be two simple root subgroup homomorphisms of $G$ of type $A_2$
  corresponding to $\alpha_1$ and $\alpha_2$. Then we have
  $x_1(a_1)x_2(a_2)x_1(a_3)=x_2(\frac{a_2a_3}{a_1+a_3})x_1(a_1+a_3)x_2(\frac{a_1a_2}{a_1+a_3})$.
\end{lem}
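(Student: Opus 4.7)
The plan is to reduce the statement to an explicit $3 \times 3$ matrix computation in $SL_3$. The identity takes place inside the subgroup generated by $x_1(\C)$, $x_2(\C)$ and the maximal torus, which is of type $A_2$ and, by the Steinberg presentation of Chevalley groups, is a quotient of $SL_3$. I can therefore assume $G = SL_3$ with $x_1(a) = I + aE_{12}$ and $x_2(a) = I + aE_{23}$; the lemma then becomes an equality of $3 \times 3$ matrices.

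The first step I would carry out is to multiply out the left-hand side. A direct calculation produces
\[
x_1(a_1)x_2(a_2)x_1(a_3) = \begin{pmatrix} 1 & a_1+a_3 & a_1 a_2 \\ 0 & 1 & a_2 \\ 0 & 0 & 1 \end{pmatrix},
\]
and in parallel $x_2(b_1)x_1(b_2)x_2(b_3)$ is the upper-unitriangular matrix whose $(1,2)$, $(2,3)$, and $(1,3)$ entries are $b_2$, $b_1+b_3$, and $b_2 b_3$, respectively. Matching entries then forces $b_2 = a_1+a_3$, $b_1+b_3 = a_2$ and $b_2 b_3 = a_1 a_2$; assuming $a_1+a_3\neq 0$ these solve uniquely as $b_3 = a_1 a_2/(a_1+a_3)$ and $b_1 = a_2 a_3/(a_1+a_3)$, which is precisely the asserted formula.

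The main (and only) obstacle is the reduction step to $SL_3$: one has to invoke the standard fact that two simple root subgroups of $A_2$-type together with the torus generate a subgroup isomorphic to a quotient of $SL_3$, so that the matrix computation legitimately captures the identity in $G$. A minor side issue is that the right-hand side requires $a_1+a_3 \neq 0$, so the identity should be read as an equality of rational maps on the open dense locus where this denominator is nonzero, which is all that is needed for the subsequent applications.
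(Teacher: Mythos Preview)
Your argument is correct. The paper does not actually give its own proof of this lemma; it simply cites Lusztig's paper on total positivity \cite{L2}. Your direct reduction to a $3\times 3$ matrix computation in $SL_3$ is exactly the standard verification of this identity and is what one finds (implicitly or explicitly) in that reference.

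One small sharpening of the reduction step: the torus is unnecessary. The identity lives entirely in the unipotent subgroup generated by $x_1(\C)$ and $x_2(\C)$, and for an $A_2$-pair the Chevalley commutator relations yield a surjective group homomorphism from the group of upper unitriangular $3\times 3$ matrices onto this subgroup sending $I+aE_{12}\mapsto x_1(a)$ and $I+aE_{23}\mapsto x_2(a)$. Your matrix identity then pushes forward directly, with no need to control the full Levi or invoke the Steinberg presentation. Your remark that the formula is to be read on the open locus $a_1+a_3\neq 0$ is also correct and consistent with how the paper uses the lemma.
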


From this lemma, we see easily that $x_\eta$ is a group
homomorphism. Similarly, we can define $y_\eta$, so that $x_\eta$
and $y_\eta$ are homomorphisms from $\C$ to $G^\st$. Since $t
x_\eta(a)t^{-1}=x_\eta(\alpha_\eta(t)a)$, $x_\eta$ is a root
subgroup homomorphism of $G^\st$ with root $\alpha_\eta$. We have
\begin{prop}
 $(T^\st, B^\st, x_\eta,
y_\eta; \eta \in I_\st)$ form a pinning of $G^\st$.
\end{prop}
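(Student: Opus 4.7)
The plan is to verify the four ingredients of a pinning---a maximal torus, a containing Borel, and root subgroup isomorphisms $x_\eta$ and $y_\eta$ for each simple root of $G^\st$---using the root datum established in the preceding proposition together with the group-theoretic facts already assembled in the text.

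First I would dispatch the torus and Borel pieces. Both $T^\st$ and $B^\st$ are connected by the cited results of Steinberg. The previous proposition identifies the cocharacter lattice of $G^\st$ as $X_*^\st$, so $T^\st$ has rank $|I_\st|$, matching the rank of $G^\st$; this forces $T^\st$ to be a maximal torus of $G^\st$. That $B^\st$ is a Borel subgroup of $G^\st$ containing $T^\st$ is standard for Dynkin automorphisms and dimensionally can be confirmed by identifying $\mathrm{Lie}(B^\st)=\mathrm{Lie}(B)^\st$ and comparing with $\mathrm{Lie}(T^\st)$ plus the sum of positive $\st$-invariant root spaces of $\mathrm{Lie}(G)$.

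Second, for each $\eta\in I_\st$, I would verify that $x_\eta:\C\to G^\st$ is an isomorphism onto the root subgroup $U_{\alpha_\eta}$ of $G^\st$ corresponding to the simple root $\alpha_\eta$. The group homomorphism property is already established (via the $A_2$ braid identity in the preceding lemma), and the conjugation formula $t x_\eta(a) t^{-1}=x_\eta(\alpha_\eta(t)a)$ for $t\in T^\st$ is noted in the text. Since each $x_i(a)$ is unipotent, so is $x_\eta(a)$; being a $T^\st$-eigenvector with weight $\alpha_\eta$ among unipotent elements of $B^\st$ then places $x_\eta(a)$ in $U_{\alpha_\eta}$. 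Injectivity is seen by differentiating at $0$: the derivative equals $\sum_{i\in\eta} de_i$ in the commuting case and $2(de_i+de_j)$ in the $a_{ij}=-1$ case, both nonzero in $\mathrm{Lie}(G^\st)$. Replacing $x_i$ with $y_i$ throughout yields the analogous statement for $y_\eta$.

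The main obstacle, if any, lies in the braided $A_2$ case: one must confirm that the asymmetric product $x_\eta(a)=x_i(a)x_j(2a)x_i(a)$ really lands in a single one-dimensional root subgroup of $G^\st$ rather than in the larger $\st$-invariant unipotent group generated by $U_{\alpha_i}$, $U_{\alpha_j}$ and $U_{\alpha_i+\alpha_j}$. This is forced by the conjugation formula together with the fact, encoded in the root datum, that $U_{\alpha_\eta}\subset G^\st$ is one-dimensional.
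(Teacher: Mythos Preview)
Your proposal is correct and follows the same line as the paper, which in fact states this proposition without a separate proof: the paper simply observes (immediately before the statement) that $x_\eta$ is a group homomorphism by the $A_2$ identity and that the conjugation formula $t\,x_\eta(a)\,t^{-1}=x_\eta(\alpha_\eta(t)a)$ makes $x_\eta$ a root subgroup homomorphism for the root $\alpha_\eta$, while the facts about $T^\st$ and $B^\st$ are drawn from Steinberg. Your write-up is a faithful expansion of this sketch, including the injectivity check via the derivative and the observation that in the $A_2$ case the image is forced to lie in the one-dimensional $U_{\alpha_\eta}$.
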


Clearly, $\st :G\rightarrow G$ induces an automorphism of $W$
denoted again by $\st$, satisfying $\st(s_i)=s_{\st(i)}$ for any
$i\in I$. Let $W^\st =\{w\in W| \st(w)=w\}$. For any $\eta \in
I_\st$ we define $s_\eta \in W^\st$ to be the longest element in the
subgroup of $W$ generated by $\{s_i; i\in \eta\}$. It is known that
$W^\st$ is a Coxeter group on the generators $\{s_\eta; \eta \in
I_\st\}$. Any element $w \in W^\st$ can be restricted to $X_* ^\st$.
Under this restriction, we can see that $W^\st$ is identified with
the Weyl group of $G^\st$. For $w\in W^\st$, we denote by
$\ell_\st(w)$ the length of $w$ in the Coxeter group $W^\st$.

\section{MV cycles and MV polytopes for the symmetric pair}\label{section 4}

\subsection{Action of $\st$ on Affine Grassmannian} \label{section 3}
Let $\mathcal{O}=\C[[t]]$, and let $\mathcal{K}$ be the quotient
field of $\mathcal{O}$. Let $\G$ and $\G_\st$ be affine Grassmannian
of $G$ and $G^\st$ respectively. As the sets of rational points over
$\C$, $\G=G(\K)/G(\mathcal{O})$, and
$\G_\st=G(\K)^\st/{G(\mathcal{O})^\st}$. A coweight $\mu \in X_*$
gives a point in $\G$, denoted by $\underline{t}^{\mu}$. It is know
that $\underline{t}^{\mu}$ is a fixed points for the action of $T$
on $\G$. In fact all the fixed points of $T$ are given in this way.

For a given dominant coweight $\lambda$, we set $\G
^\lambda=G(\mathcal{O})\cdot \underline{t}^{\lambda}$. We have the
decomposition $\G=\bigsqcup_{\lambda \in X_* ^+} \G ^\lambda$, where
$X_* ^+$ is the set of dominant coweights.

Let $N$ be the unipotent radical of $B$. For $w\in W$, we set
$N_w=wNw^{-1}$. For $w\in W$ and $\mu \in X_*$, define the
\textsf{semi-infinite cells} by $S_w ^\mu =N_w(\K)\cdot
\underline{t}^\mu$. For simplicity, we set $S^\mu=S_e
^\mu=N(\K)\cdot \underline{t}^\mu$. We have $\G=\bigsqcup_{\mu\in
X_*} S^\mu$. The semi-infinite cells have the simple containment
relation, $\overline{S_w ^\mu}=\bigsqcup_{\nu\leq_w \mu} S_w ^\nu$.
We see that if $S_w ^\mu \cap S_v ^\nu \neq \O$, then $\nu \leq_w
\mu$.

We have the closed embedding $\iota:\G_\st \hookrightarrow \G$.
Since $\st(S^\lambda)=S^{\st(\lambda)}$, we have
$\G^\st=\bigsqcup_{\lambda\in X_* ^\st}(S^\lambda)^\st$.

Set $U:=\{g(t^{-1})\in G(\C[t^{-1}])|g(0)=1)\}$. Then the fixed
point set $U^\st=\{g(t^{-1})\in G^\st(\C[t^{-1}])|g(0)=1)\}$. For a
coweight $\lambda$, set $S(\lambda):=N(\C[t,t^{-1}]) \cap t^\lambda
U t^{-\lambda}$ and $S_\st(\lambda):=N^\st(\C[t,t^{-1}]) \cap
t^\lambda U^\st t^{-\lambda}$.

The following result should be well-known.
\begin{lem}\label{sim}
  Let $\lambda \in X_*$. Then the group $S(\lambda)$ acts
  simply-transitively on $S ^\lambda$, i.e., $S(\lambda)\simeq S
  ^\lambda$, with the map $g \mapsto g.t^\lambda$.
\end{lem}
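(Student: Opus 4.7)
My plan is to show that the orbit map $\phi \colon S(\lambda) \to S^\lambda$, $g \mapsto g \cdot \underline{t}^\lambda$, is a bijection, proving injectivity and surjectivity separately. The surjectivity half rests on a Birkhoff-type decomposition of $N(\K)$, which is the main technical point.

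For injectivity, suppose $g_1 \cdot \underline{t}^\lambda = g_2 \cdot \underline{t}^\lambda$ in $\G$ with $g_1, g_2 \in S(\lambda)$. Then $g_2^{-1} g_1 \in t^\lambda G(\mathcal{O}) t^{-\lambda}$, and since each $g_i$ lies in $t^\lambda U t^{-\lambda}$ by the definition of $S(\lambda)$, the element $t^{-\lambda}(g_2^{-1} g_1) t^\lambda$ belongs to $U \cap G(\mathcal{O})$. But this intersection is trivial: an element of $G(\C[t^{-1}]) \cap G(\C[[t]])$ is necessarily constant in $G(\C)$, and the condition $g(0)=1$ then forces it to be $1$. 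Hence $g_1 = g_2$.

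For surjectivity, given $n \in N(\K)$, set $m := t^{-\lambda} n t^\lambda$; this still lies in $N(\K)$ because $T$ normalizes $N$. I would then invoke the decomposition
$$N(\K) \;=\; \bigl(U \cap N(\K)\bigr) \cdot N(\mathcal{O}),$$
uniquely, and write $m = u \cdot n_0$ with $u \in U \cap N(\K)$ and $n_0 \in N(\mathcal{O})$. Put $g := t^\lambda u t^{-\lambda}$. Conjugation by $t^\lambda$ preserves each root subgroup $N_\alpha(\K)$ and rescales its coordinate by $t^{\langle \lambda, \alpha \rangle}$, so $g \in N(\C[t,t^{-1}])$, and by construction $g \in t^\lambda U t^{-\lambda}$. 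Hence $g \in S(\lambda)$, and since $n = g \cdot (t^\lambda n_0 t^{-\lambda})$ with $n_0 \in G(\mathcal{O})$ fixing the basepoint, we obtain $n \cdot \underline{t}^\lambda = g \cdot \underline{t}^\lambda$, as required.

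The main obstacle is justifying the decomposition $N(\K) = (U \cap N(\K)) \cdot N(\mathcal{O})$ with unique factors. At the level of a single root subgroup $N_\alpha(\K) \cong \K$ this is just the vector-space splitting $\K = t^{-1}\C[t^{-1}] \oplus \mathcal{O}$, and the intersection $(U \cap N(\K)) \cap N(\mathcal{O})$ is trivial by the same evaluation argument used for injectivity. To promote this to all of $N$, I would order the positive roots compatibly with height (for instance by fixing an adapted reduced expression of $w_0$), write $N(\K)$ as the ordered product of root subgroups, and argue by induction on the descending central series: commutators of root subgroups land in root subgroups of strictly greater height, so the local splittings $\K = t^{-1}\C[t^{-1}] \oplus \mathcal{O}$ can be rearranged into the required global product form without ambiguity.
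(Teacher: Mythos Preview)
The paper does not actually prove this lemma; it introduces it with ``The following result should be well-known'' and gives no argument. So there is nothing to compare against on the paper's side.

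Your argument is correct. Injectivity is exactly the observation that $U\cap G(\mathcal{O})=\{1\}$, which you justify properly. For surjectivity, the reduction to the Birkhoff-type factorization $N(\K)=(U\cap N(\K))\cdot N(\mathcal{O})$ is the right move, and your inductive sketch via the height filtration (equivalently, the descending central series of $N$) is the standard way to prove it: on each abelian graded piece you have the additive splitting $\K=t^{-1}\C[t^{-1}]\oplus\mathcal{O}$, and the commutator corrections land in strictly higher filtration steps, so the induction closes. One small remark: to see $g=t^{\lambda}ut^{-\lambda}\in N(\C[t,t^{-1}])$ you only need $u\in N(\C[t^{-1}])\subset N(\C[t,t^{-1}])$ together with the fact that conjugation by $t^{\lambda}$ preserves $N(\C[t,t^{-1}])$; the root-by-root rescaling you mention is a fine way to see this, but you don't need the finer statement that each root coordinate lies in $t^{-1}\C[t^{-1}]$.
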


\begin{prop}\label{pro:fixed}
  The fixed point subvariety of the action of $\st$ on $\G$ is
  exactly identified with $\G_\st$.
\end{prop}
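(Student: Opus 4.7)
The containment $\mathcal{G}_\st \subseteq \mathcal{G}^\st$ is automatic: the closed embedding $\iota$ factors through $\mathcal{G}^\st$, since every element of $G^\st(\mathcal{K})$ is pointwise fixed by $\st$. The work is in the reverse inclusion, and the strategy is to compare the two sides cell by cell along the semi-infinite stratifications of $\mathcal{G}$ and $\mathcal{G}_\st$, using Lemma~\ref{sim} as the main tool on both sides.

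The excerpt already records $\mathcal{G}^\st=\bigsqcup_{\mu\in X_*^\st}(S^\mu)^\st$, so it suffices, for each $\st$-invariant coweight $\mu$, to identify $(S^\mu)^\st$ with the corresponding semi-infinite cell $S_\st^\mu := N^\st(\mathcal{K})\cdot \underline{t}^\mu \subseteq \mathcal{G}_\st$. Since $\mu$ is $\st$-invariant, the point $\underline{t}^\mu$ is $\st$-fixed, hence the isomorphism $S(\mu)\xrightarrow{\sim} S^\mu$, $g\mapsto g\cdot \underline{t}^\mu$, of Lemma~\ref{sim} is $\st$-equivariant. Thus $(S^\mu)^\st \cong S(\mu)^\st$. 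Applying Lemma~\ref{sim} to the group $G^\st$ in place of $G$ gives an isomorphism $S_\st(\mu)\xrightarrow{\sim} S_\st^\mu$. So the whole proposition reduces to the group-theoretic identity
\[
S(\mu)^\st \;=\; S_\st(\mu),
\]
namely $\bigl(N(\C[t,t^{-1}])\cap t^\mu U t^{-\mu}\bigr)^\st = N^\st(\C[t,t^{-1}])\cap t^\mu U^\st t^{-\mu}$.

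The inclusion $\supseteq$ is immediate. For $\subseteq$, one needs $N(\C[t,t^{-1}])^\st = N^\st(\C[t,t^{-1}])$ and, using that $\mu$ is $\st$-invariant so that conjugation by $t^\mu$ commutes with $\st$, the analogous identity $U^\st = \{g\in G^\st(\C[t^{-1}]) \mid g(0)=1\}$ stated in Section~\ref{section 3}. Both reduce to the general fact that for a finite-order Dynkin automorphism on a connected smooth affine group $H$ in characteristic zero, $H(R)^\st = H^\st(R)$ for every commutative $\C$-algebra $R$; for the unipotent groups $N$ and $U$ one may even argue directly via the $\st$-stable root subgroup decomposition, writing a generic element as a product of $x_\alpha(f_\alpha)$ and comparing coefficients indexed by $\st$-orbits of roots.

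Putting these pieces together, $(S^\mu)^\st \cong S(\mu)^\st = S_\st(\mu) \cong S_\st^\mu$, and the composite identification is visibly the restriction of $\iota$. Taking the disjoint union over $\mu\in X_*^\st$ then yields $\mathcal{G}^\st = \mathcal{G}_\st$ as subvarieties (more precisely, sub-ind-varieties) of $\mathcal{G}$. The only genuine obstacle is the group-theoretic identity $S(\mu)^\st = S_\st(\mu)$ of the previous paragraph; everything else is a formal patching together of Lemma~\ref{sim} for $G$ and for $G^\st$.
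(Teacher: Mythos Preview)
Your proposal is correct and follows essentially the same route as the paper: reduce, via the semi-infinite decomposition $\mathcal{G}^\st=\bigsqcup_{\mu\in X_*^\st}(S^\mu)^\st$ and Lemma~\ref{sim} (applied to both $G$ and $G^\st$), to the group-theoretic identity $S(\mu)^\st=S_\st(\mu)$, and then verify this identity by taking $\st$-fixed points of the defining intersection $N(\C[t,t^{-1}])\cap t^\mu U t^{-\mu}$. Your write-up is in fact more careful than the paper's one-line verification, making explicit the $\st$-equivariance of the map in Lemma~\ref{sim} and the reason why $N(\C[t,t^{-1}])^\st=N^\st(\C[t,t^{-1}])$ and $(t^\mu U t^{-\mu})^\st=t^\mu U^\st t^{-\mu}$.
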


\begin{proof}

From Lemma \ref{sim}, we are reduced to show
$S(\lambda)^\st=S_\st(\lambda)$ for $\lambda \in X_* ^\st$, and it
is easy to see, since
$$S(\lambda)^\st=N(\C[t,t^{-1}])^\st \cap (t^\lambda U t^{-\lambda})^\st=N^\st(\C[t,t^{-1}]) \cap t^\lambda U^\st t^{-\lambda}=S_\st(\lambda).$$
\end{proof}

From $\overline{\G^\lambda}=\bigsqcup_{\mu \leq \lambda} \G ^\mu$,
$\overline{S_w ^\mu}=\bigsqcup_{\nu\leq_w \mu} S_w ^\nu$ and the
above proposition, we can easily see that
\begin{cor}\label{cor}
For $\lambda$ a $\st$-invariant, and $w$ a $\st$-invariant element
in W, we have $(\G ^\lambda)^\st=\G_\st ^\lambda$, $\overline{\G
^\lambda}^\st=\overline{\G_\st ^\lambda}$, $(S_w ^\mu)^\st=(S_\st)_w
^\mu$, and $\overline{S_w ^\mu}^\st=\overline{(S_\st)_w ^\mu}$.
\end{cor}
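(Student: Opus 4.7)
The plan is to combine Proposition~\ref{pro:fixed} with the stratifications of $\G$ and $\G_\st$, checking each of the four equalities on matching strata. The inputs are the $G(\mathcal{O})$-orbit decomposition $\G=\bigsqcup_\lambda\G^\lambda$, the cell decomposition $\G=\bigsqcup_\mu S_w^\mu$, the corresponding decompositions of $\G_\st$, and the closure formulas $\overline{\G^\lambda}=\bigsqcup_{\mu\leq\lambda}\G^\mu$ and $\overline{S_w^\mu}=\bigsqcup_{\nu\leq_w\mu}S_w^\nu$ recalled immediately before the statement.

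First I would prove $(\G^\lambda)^\st=\G_\st^\lambda$. Since $\st$ permutes orbits by $\G^\mu\mapsto\G^{\st(\mu)}$, the orbit $\G^\lambda$ is $\st$-stable when $\lambda$ is $\st$-invariant, so $(\G^\lambda)^\st$ is a well-defined closed subvariety of $\G^\st$. The inclusion $\G_\st^\lambda\subseteq(\G^\lambda)^\st$ is immediate, since $G^\st(\mathcal{O})\subseteq G(\mathcal{O})^\st$ acts on the $\st$-fixed point $\underline{t}^\lambda$. Taking the disjoint union over $\st$-invariant dominant $\lambda$, the left side fills out $\G_\st$ through its own orbit stratification, while the right side fills out $\G^\st$ by taking $\st$-fixed points of $\G=\bigsqcup_\mu\G^\mu$ and keeping only $\st$-invariant indices; Proposition~\ref{pro:fixed} identifies the two sums, forcing the componentwise inclusions to be equalities. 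The identity $(S_w^\mu)^\st=(S_\st)_w^\mu$ is proved by exactly the same argument, using the cell decomposition $\G=\bigsqcup_\mu S_w^\mu$ together with the obvious inclusion coming from the fact that $\st$ preserves both $N_w$ and $\underline{t}^\mu$.

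For the two closure statements I would take $\st$-fixed points directly inside $\overline{\G^\lambda}=\bigsqcup_{\mu\leq\lambda}\G^\mu$ and $\overline{S_w^\mu}=\bigsqcup_{\nu\leq_w\mu}S_w^\nu$; fixed points commute with the disjoint union once one restricts to $\st$-invariant indices, and substituting the two orbit/cell identities from the previous paragraph rewrites the result as a disjoint union of the corresponding $\G_\st$-strata or $(S_\st)_w$-cells. The only step that needs genuine attention, and which I expect to be the main obstacle, is the matching of closure orders on $\st$-invariant coweights: one has to verify that $\mu\leq\lambda$ in $X_*$ with $\mu,\lambda\in X_*^\st$ dominant agrees with the analogous order in the root datum of $G^\st$, and similarly for the twisted orders $\leq_w$ when $w\in W^\st$. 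This reduces to a direct comparison of positive coroot cones using the formula $\alpha_\eta^\vee=2^h\sum_{i\in\eta}\alpha_i^\vee$ of \secref{section 2.2}, after which all four identities in the corollary follow in one stroke.
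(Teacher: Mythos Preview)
Your strategy is precisely the paper's: invoke Proposition~\ref{pro:fixed} together with the orbit and semi-infinite cell decompositions and the closure formulas. The paper compresses all of this into the single phrase ``we can easily see that'', whereas you have unpacked the argument and correctly isolated the one genuinely nontrivial step---the compatibility of the dominance orders on $X_*^\st$ coming from $G$ and from $G^\st$---which the paper does not address at all.

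That step is a real obstruction, and your proposed resolution does not go through in type $A_{2n}$. When $\eta=\{i,j\}$ with $a_{ij}=-1$ (so $h=1$), the element $\alpha_i^\vee+\alpha_j^\vee$ lies in the positive coroot cone of $G$ and is $\st$-invariant, yet it equals $\tfrac12\alpha_\eta^\vee$ and so is \emph{not} a nonnegative integer combination of simple coroots of $G^\st$. Hence the two orders on $X_*^\st$ disagree. Concretely, take $G$ of type $A_2$ and $\lambda=\alpha_1^\vee+\alpha_2^\vee$: then $\underline{t}^0\in\overline{\G^\lambda}^\st$ since $0\le\lambda$ for $G$, but $\underline{t}^0\notin\overline{\G_\st^\lambda}$, because $\G_\st^0$ and $\G_\st^\lambda$ lie in distinct connected components of the affine Grassmannian of $G^\st\cong PGL_2$. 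So the closure equalities in the Corollary, and hence your argument for them, actually fail as stated in type $A_{2n}$; your comparison of coroot cones is correct and the whole proof goes through exactly as you describe only when every orbit has $h=0$, i.e.\ outside type $A_{2n}$.
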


\subsection{MV cycles and MV polytopes}\label{section 4.1}

 Let $\mu_1$, $\mu_2$ be coweights with $\mu_1\geq \mu_2$.
Following Anderson \cite{A}, an irreducible component of
$\overline{S_e ^{\mu_1} \cap S_{w_0} ^{\mu_2}}$ is called an
\textsf{MV cycle} with coweight $(\mu_1, \mu_2)$. This definition of
an MV cycle is a generalization of the original one in \cite{MV}.
$X_*$ acts on $\G$ by $\nu\cdot L:=t^\nu\cdot L$. Since $T$
normalizes $N_w$, we see that $\nu\cdot S_w ^{\mu}=S_w ^{\mu+\nu}$.
If $A$ is a component of $\overline{S_e ^{\mu_1} \cap S_{w_0}
^{\mu_2}}$, then $\nu\cdot A$ is a component of $\overline{S_e
^{\mu_1+\nu} \cap S_{w_0} ^{\mu_2+\nu}}$. Hence $X_*$ acts on the
set of all MV cycles. The orbit of an MV cycle with coweight
$(\mu_1, \mu_2)$ is called a stable MV cycle with coweight $\mu_2
-\mu_1$. Note that a stable MV cycle with coweight $\mu$ has a
unique representative with coweight $(\nu, \nu+\mu)$ for a fixed
coweight $\nu$.

Let $\text{\rm{MVC}}_G$ denote the set of stable MV cycles for $G$,
and let $\text{\rm{MVC}}^\mu_G$ denote the set of those with
coweight $\mu$.  For a $T$-invariant closed subvariety $A$ of the
affine Grassmannian, let $\Phi(A)\subset
t_\mathbb{R}:=X_*\otimes\mathbb{R}$ be the moment polytope of $A$,
which is exactly the convex hull of $\{\mu \in X_*| t^\mu \in A\}$.

If $A$ is an MV cycle with coweight $(\mu_1,\mu_2)$, then we say
that $\Phi(A)$ is an \textsf{MV polytope} with coweight $(\mu_1,
\mu_2)$. The action of $X_*$ on the set of MV cycles gives an action
of $X_*$ on the set of MV polytopes. It is easy to see that
$\nu\cdot P=P+\nu$. The orbit of $X_*$ on an MV polytope with
coweight $(\mu_1,\mu_2)$ is called a stable MV polytope with
coweight $\mu_2-\mu_1$.

Let $\text{\rm{MVP}}_G$ be the set of stable MV polytopes for $G$,
and let $\text{\rm{MVP}}^\mu_G$ be the set of stable MV polytopes
for $G$ with coweight $\mu$. As mentioned in \cite{A}, there is a
natural bijection between $\text{\rm{MVC}}_G$ and
$\text{\rm{MVP}}_G$. Let $C$ be an MV cycle, and $[C]$ be its stable
MV cycle. Let $P_C$ be the corresponding MV polytope of $C$, and
$[P_C]$ be its stable MV polytope. If there is no confusion, we
write $C$ (resp. $P$) for both MV cycle (or polytope) and stable MV
cycle (resp. polytope).

Suppose we are given a collection of coweights
$\mu_\bullet=(\mu_w)_{w\in W}$ such that $\mu_v \leq_w \mu_w$ for
all $v,w \in W$. Then we define the corresponding pseudo-Weyl
polytope by:
\begin{equation*}\label{polytope}
P(\mu_\bullet):=\cap_w C_w ^{\mu_w}=\{\alpha| \langle\alpha,w\cdot
\lambda_i\rangle\leq \langle\mu_w, w\cdot \lambda\rangle,
\forall\text{ } w\in W,\text{ and } i\in I\}.
\end{equation*}

For a collection $(\mu_w)_{w\in W}$ with coweights such that
$\mu_y\leq_w \mu_w$, for any $y, w\in W$, set $A(\mu_\bullet)=\cap
S_w ^{\mu_w}$, and let $\textrm{Conv}(\mu_\bullet)$ be the convex
hull of $(\mu_w)_{w\in W}$ in $t_\R$. $A(\mu_\bullet)$ is called a
\emph{GGMS stratum}, and it is a candidate of MV cycles. If it is
not empty, then the moment polytope of $\overline{A(\mu_\bullet)}$
is exactly $\textrm{Conv}(\mu_\bullet)$ (see Lemma 2.3, \cite{K1}),
which also coincides with $P(\mu_\bullet)$. That is,
$\textrm{Conv}(\mu_\bullet)=P(\mu_\bullet)$.

The following theorem gives a criterion for the closure of a GGMS
stratum to be an MV cycle.
\begin{specialthm}[Kamnitzer\cite{K1}]
Let $(\mu_w)_{w\in W}$ be the set with coweights, such that
$\mu_y\leq_w \mu_w$, for any $y, w\in W$. Then
$\overline{A(\mu_\bullet)}=\overline{\cap S_w
 ^{\mu_w}}$ is an MV cycle if and only if $\text{\rm{Conv}}(\mu_\bullet)$ is an MV
  polytope.
\end{specialthm}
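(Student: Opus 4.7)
The plan is to prove the two implications separately. The forward direction is nearly tautological given the paragraph immediately preceding the statement: if $\overline{A(\mu_\bullet)}$ is an MV cycle, then $A(\mu_\bullet)$ is in particular non-empty, so its moment polytope coincides with $\textrm{Conv}(\mu_\bullet)$, while the moment polytope of an MV cycle is by definition an MV polytope.

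The substance lies in the reverse direction. Assume $P:=\textrm{Conv}(\mu_\bullet)$ is an MV polytope, so there exists an MV cycle $Z$ with $\Phi(Z)=P$. Since an MV cycle is by definition an irreducible component of some $\overline{S_e^{\nu_1}\cap S_{w_0}^{\nu_2}}$, and the extreme vertices of $P$ in the chamber directions are precisely $\nu_1=\mu_e$ and $\nu_2=\mu_{w_0}$, we have that $Z$ is an irreducible component of $\overline{S_e^{\mu_e}\cap S_{w_0}^{\mu_{w_0}}}$. The goal is then to identify $Z$ with $\overline{A(\mu_\bullet)}$.

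The crucial ingredient is an \emph{extremal vertex lemma}: for any $T$-invariant irreducible closed subvariety $Y\subset\G$ and any $T$-fixed point $\underline{t}^\mu\in Y$ whose coweight $\mu$ is the $\leq_w$-maximum of $\Phi(Y)\cap X_*$, one has $Y\subseteq\overline{S_w^{\mu}}$, and $Y\cap S_w^{\mu}$ is open dense in $Y$. I would prove this using the attracting $\C^\times$-action on $\G$ associated to a dominant regular cocharacter transported to the $w$-Weyl chamber, whose attractor of $\underline{t}^\nu$ in $\G$ is exactly $S_w^\nu$; the attractor of $Y$ must be non-empty and open in $Y$, hence dense. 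Applying this to $Z$ at each vertex $\mu_w$, the intersection $Z\cap A(\mu_\bullet)=\bigcap_{w\in W}(Z\cap S_w^{\mu_w})$ is open dense in $Z$, giving $Z\subseteq\overline{A(\mu_\bullet)}$.

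For the reverse inclusion, both $Z$ and $\overline{A(\mu_\bullet)}$ lie in $\overline{S_e^{\mu_e}\cap S_{w_0}^{\mu_{w_0}}}$, which is of pure dimension $\langle\mu_e-\mu_{w_0},\rho\rangle$. Since $Z$ is an irreducible component of this ambient variety and shares the dense open subset $Z\cap A(\mu_\bullet)$ with $\overline{A(\mu_\bullet)}$, a comparison of dimensions and irreducible components forces $Z=\overline{A(\mu_\bullet)}$. The main obstacle is establishing the extremal vertex lemma rigorously: the ind-scheme nature of $\G$ requires working inside a finite-dimensional $T$-stable Schubert subvariety where the attracting $\C^\times$-flow has well-defined limits, and verifying that the resulting attractor decomposition restricts to the semi-infinite cell stratification on $\G$. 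Once that local-to-global passage is carried out, the remaining component and dimension bookkeeping is routine.
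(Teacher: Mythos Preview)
The paper does not prove this statement at all; it is quoted as a result of Kamnitzer \cite{K1} and used as a black box. So there is no ``paper's own proof'' to compare against, only Kamnitzer's original argument and the surrounding machinery the paper imports from \cite{K1} in Section~\ref{section 4.3}.

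Your forward direction is fine, and the extremal-vertex (Bia\l{}ynicki--Birula attractor) lemma is indeed the correct geometric input for the reverse direction; it gives $Z\cap A(\mu_\bullet)$ open dense in $Z$, hence $Z\subseteq\overline{A(\mu_\bullet)}$. The gap is in the last paragraph: the reverse inclusion $\overline{A(\mu_\bullet)}\subseteq Z$ is \emph{not} routine dimension bookkeeping. You have not ruled out that $A(\mu_\bullet)$ contains points lying in some other irreducible component $Z'\neq Z$ of $\overline{S_e^{\mu_e}\cap S_{w_0}^{\mu_{w_0}}}$. For such a point $p\in A(\mu_\bullet)\cap Z'$, the attractor argument applied to $Z'$ only yields $\mu_w\leq_w\mu_w^{Z'}$ for every $w$ (since $\underline{t}^{\mu_w}\in Z'$ by flowing $p$), i.e.\ $P\subseteq\Phi(Z')$; this containment of moment polytopes does not force $Z'=Z$. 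Equivalently, you have not shown that $\overline{A(\mu_\bullet)}$ is irreducible, which is exactly what ``is an MV cycle'' requires.

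Kamnitzer's actual route is different and bypasses this difficulty. He fixes a reduced word $\mathbf{i}$ and produces an explicit surjection $\pi_{\mathbf{i}}:B(n_\bullet)\to A^{\mathbf{i}}(n_\bullet)=\bigcap_k S_{w_k^{\mathbf{i}}}^{\mu_{w_k^{\mathbf{i}}}}$ from an irreducible parameter space (this is the Lusztig-datum parametrization recalled in Section~\ref{section 4.3} of the present paper). Thus $A^{\mathbf{i}}(n_\bullet)$ is irreducible of the correct dimension, so its closure is an MV cycle. One then shows, by varying $\mathbf{i}$ and using the density statement $A^{\mathbf{i}}(n_\bullet)\cap A^{\mathbf{i}'}(n'_\bullet)$ dense in $A^{\mathbf{i}}(n_\bullet)$ whenever $R_{\mathbf{i}}^{\mathbf{i}'}(n_\bullet)=n'_\bullet$, that the full intersection $A(\mu_\bullet)$ over all $w\in W$ has the same closure. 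So irreducibility comes from an explicit chart, not from moment-polytope considerations. If you want to salvage your approach, you would need an independent argument that $A(\mu_\bullet)$ is irreducible, or that every component of $\overline{A(\mu_\bullet)}$ is an MV cycle with moment polytope exactly $P$; neither follows from what you have written.
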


Let $P$ be an MV polytope with vertices $(\mu_w)_{w\in W}$. Then $P$
is the moment polytope of an MV cycle $\overline{\cap S_w
^{\mu_w}}$. In this case, $\st(\overline{\cap S_w
^{\mu_w}})=\overline{\cap S_w ^{\st(\mu_{\st^{-1} (w)})}}$ is also
an MV cycle, and its moment polytope is exactly
$\text{\rm{Conv}}(\st(\mu_{\st^{-1}(w)}))$. Hence it is an MV
polytope with vertices $(\st(\mu_{\st^{-1}(w)}))_{w\in W}$, which
coincides with $\st(P)$.

\begin{lem}\label{equiv}
Let $(\mu_w)_{w\in W}$ be the vertices of an MV polytope $P$, and
let $A(\mu_\bullet)$ be the corresponding GGMS stratum, such that
$\overline{A(\mu_\bullet)}$ is an MV cycle. Then the following
statements are equivalent:
\begin{enumerate}
\item
$P$ is $\st$-invariant.
\item
 $\overline{A(\mu_\bullet)}$ is $\st$-invariant.
\item
 $A(\mu_\bullet)$ is $\st$-invariant.
\item
 $\st(\mu_w)=\mu_{\st(w)}$, $\forall\text{ } w \in W$.
 \end{enumerate}
\end{lem}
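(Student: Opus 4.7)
My plan is to prove the equivalences by establishing the cycle $(1)\Leftrightarrow(4)\Rightarrow(3)\Rightarrow(2)\Rightarrow(1)$. Almost all the ingredients are already stated in the excerpt; the work is to assemble them correctly and track the indexing by $W$.

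First I would handle $(1)\Leftrightarrow(4)$. The vertices $(\mu_w)_{w\in W}$ of an MV polytope are canonically indexed (each $\mu_w$ is the $w$-extremal vertex, equivalently the unique $T$-fixed point lying in $\overline{A(\mu_\bullet)}\cap S_w^{\mu_w}$), so the indexed tuple is an invariant of $P$. The paragraph immediately preceding the lemma already records that $\sigma(P)$ is the MV polytope with vertex tuple $(\sigma(\mu_{\sigma^{-1}(w)}))_{w\in W}$. Comparing index-by-index, $\sigma(P)=P$ is equivalent to $\sigma(\mu_{\sigma^{-1}(w)})=\mu_w$ for all $w$, and substituting $w\mapsto \sigma(w)$ gives (4). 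This direction is the cleanest; no geometry is needed beyond the uniqueness of the vertex labelling.

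Next I would prove $(4)\Rightarrow(3)$ by a direct computation. Since $\sigma$ normalizes $N$ and $T$ (because $\sigma(B)=B$, $\sigma(T)=T$), it permutes the semi-infinite cells by the rule
\begin{equation*}
\sigma(S_w^{\mu}) \;=\; S_{\sigma(w)}^{\sigma(\mu)}.
\end{equation*}
Assuming (4), we then get
\begin{equation*}
\sigma\bigl(A(\mu_\bullet)\bigr)=\sigma\!\Bigl(\bigcap_{w\in W}S_w^{\mu_w}\Bigr)=\bigcap_{w\in W}S_{\sigma(w)}^{\sigma(\mu_w)}=\bigcap_{w\in W}S_{\sigma(w)}^{\mu_{\sigma(w)}}=A(\mu_\bullet),
\end{equation*}
where the last step is the change of variable $w'=\sigma(w)$. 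Step $(3)\Rightarrow(2)$ is immediate: closure commutes with the action of the algebraic automorphism $\sigma$ on $\G$.

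Finally, for $(2)\Rightarrow(1)$, I would use the definition of the moment polytope: $P=\Phi(\overline{A(\mu_\bullet)})$ is the convex hull of $\{\mu\in X_*\mid \underline{t}^{\mu}\in\overline{A(\mu_\bullet)}\}$. Since $\sigma(\underline{t}^\mu)=\underline{t}^{\sigma(\mu)}$, a $\sigma$-invariance of $\overline{A(\mu_\bullet)}$ transports to $\sigma$-invariance of this set of $T$-fixed points, and hence of its convex hull, which is $P$. This closes the cycle.

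The only place where there is anything to think about is $(1)\Leftrightarrow(4)$: one must be convinced that the vertex labelling $w\mapsto\mu_w$ is intrinsic to $P$ (so that equality of polytopes forces equality of labelled tuples). Once that is granted, everything else is formal manipulation of intersections, closures, and moment maps, using only $\sigma(S_w^\mu)=S_{\sigma(w)}^{\sigma(\mu)}$ and the remarks already recorded in the excerpt just before the lemma.
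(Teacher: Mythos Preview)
Your proof is correct and covers all four implications. The organization differs slightly from the paper's, and the one substantive difference is how you pass from (2) to (4).

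The paper proves $(2)\Rightarrow(4)$ directly by a geometric argument: if $\overline{\cap S_w^{\mu_w}}=\overline{\cap S_w^{\sigma(\mu_{\sigma^{-1}(w)})}}$, then since both GGMS strata are locally closed and dense in the common closure, they must meet; hence for every $w$ the cells $S_w^{\mu_w}$ and $S_w^{\sigma(\mu_{\sigma^{-1}(w)})}$ intersect, which forces $\mu_w=\sigma(\mu_{\sigma^{-1}(w)})$ because distinct $S_w^{\nu}$ are disjoint. You instead route through (1), using that the moment polytope $\Phi$ is $\sigma$-equivariant and that the vertex labelling $w\mapsto\mu_w$ of a pseudo-Weyl polytope is intrinsic (each $\mu_w$ is the unique vertex extremal for the $w$-chamber). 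Both arguments are short; yours trades the ``two dense opens intersect'' step for the combinatorial uniqueness of the GGMS vertex data, which is already implicit in the paper's bijection between MV cycles and MV polytopes. Either route is fine, and the remaining implications $(4)\Rightarrow(3)\Rightarrow(2)$ and $(4)\Rightarrow(1)$ are handled identically in both proofs.
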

\begin{proof}
Since MV cycles are parametrized by MV polytopes bijectively, it is
easy to see that the moment polytope of $\st(\overline{\cap
S_w^{\mu_w}})$ is $\st(P)$. So $P$ is $\st$-invariant if and only if
$\overline{A(\mu_\bullet)}$ is $\st$-invariant, i.e.,
$(1)\Leftrightarrow(2)$.

Assume $\overline{A(\mu_\bullet)}$ is $\st$-invariant. Then
$\overline{ \cap S_w ^{\mu_w}}= \overline{\cap S_w
^{\st(\mu_{\st^{-1} (w)})}}$. Since $\cap S_w ^{\mu_w}$ and $\cap
S_w ^{\st(\mu_{\st^{-1} (w)})}$ are locally closed, we have $(\cap
S_w ^{\mu_w}) \cap (\cap S_w ^{\st(\mu_{\st^{-1} (w)})}) \neq \O$.
It implies that, $\forall\text{ } w\in W$, $S_w ^{\mu_w} \cap S_w
^{\st(\mu_{\st^{-1} (w)})}\neq \O.$ Hence $\mu_w=\st(\mu_{\st^{-1}
(w)}), \forall\text{ } w\in W$. So $(2)\Rightarrow (4)$.

It is easy to see $(3)\Leftrightarrow (4)$, and (4) implies (1)
immediately.
\end{proof}

\subsection{Lusztig datum }\label{section 4.3}
Let $\bold{i}$ be a reduced word, and $n_\bullet \in \N ^m$. Recall
some results in \cite{K1}. We define $\{\mu_{w_k ^\bold{i}}\}_{0\leq
k\leq m}$ inductively by $\mu_e =0$ and $\mu_{w_k
^\bold{i}}=\mu_{w_{k-1}^\bold{i}}-n_k w_{k-1} ^\bold{i}(\alpha_{i_k}
^\vee)$, for any $1\leq k\leq m$. Set $A^\bold{i}(n_\bullet)=\cap
S_{w_k ^\bold{i}} ^{\mu_{w_k^\bold{i}}}$. Then
$\overline{A^\bold{i}(n_\bullet)}$ is an MV cycle with coweight
$\mu_{w_0}$, and the corresponding MV polytope $P$ has
$\bold{i}$-Lusztig datum $n_\bullet$. From the corresponding
$\bold{i}$-Lusztig datum of the MV polytope $P$, we can recover the
vertices of $P$ uniquely, through the above procedure. In this way,
we have a bijection from MV polytopes to $\bold{i}$-Lusztig data.
Moreover, there exists an explicit bijection between
$\bold{i}$-Lusztig data and MV cycles, $\tau_{\bold{i}}:
\N^m\rightarrow \text{\rm{MVC}}$ by $\tau_{\bold{i}}(n_\bullet)=
\overline{A^{\bold{i}}(n_\bullet)}$.

Let $\bold{i}$, $\bold{i}'$ be two reduced words of $w_0$. It is
known that $\bold{i}'$ can be obtained from \textbf{i} through
several braid moves. Fix a path of braid moves from $\bf{i}$ to
$\bf{i}'$. For each move, there is a transform ( in Proposition 5.2,
\cite{K1}) between the Lusztig data of $P$ along the two consecutive
reduced words. By combining these transforms, we get a bijection
$R_{\bold{i}} ^{\bold{i}'}: \mathbb{N}^m\rightarrow \mathbb{N}^m$,
which is independent of the choice of the path from $\bold{i}$ to
$\bold{i}'$. We call it the \textsf{Lusztig transform} from
$\bold{i}$ to $\bold{i}'$ for $G$. From \cite{K1}, we also know that
$R^{\bold{i}'}_{\bold{i}}(n_\bullet)=n'_\bullet$ if and only if
$A^{\bold{i}}(n_\bullet)\cap A^{\bold{i}'}(n'_\bullet)$ is dense in
$A^\bold{i}(n_\bullet)$.

We give a necessary and sufficient condition on the
$\bold{i}$-Lusztig datum $n_\bullet$, so that $P$ is
$\st$-invariant. We call such $n_\bullet$ is a $\st$-invariant
$\bold{i}$-Lusztig datum.
\begin{prop}\label{prop:invariant}
Let $w_0=s_{\eta_1}s_{\eta_2}\cdots s_{\eta_m}$ be a reduced
expression of $w_0$ relative to the Coxeter group $W^\st$, where
$\eta_1, \eta_2, \cdots, \eta_m$, are orbits of $\st$ in $I$. For
each $\eta$, we fix a reduced expression of $s_\eta$ as an element
of $W$, and denote by $\bf{i}$ the resulting reduced expression of $
w_0$ relative to $W$. Let $n_\bullet$ be the $\bold{i}$-Lusztig
datum of $P$. Then $P$ is $\st$-invariant if and only if
$n_1=n_2=\cdots=n_{r_{\eta_1}}$,
$n_{r_{\eta_1}+1}=n_{r_{\eta_1}+2}=\cdots=n_{r_{\eta_1}+r_{\eta_2}},
\cdots$, where $r_\eta$ is the length of $s_\eta$ as an element of
$W$.

\end{prop}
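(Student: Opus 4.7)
The plan is to translate $\st$-invariance of $P$ into a fixed-point equation on the $\bold{i}$-Lusztig datum, and then exploit the block structure of $\bold{i}$. Since $\st$ is a Dynkin automorphism, applying $\st$ entrywise to $\bold{i}$ yields another reduced word $\st(\bold{i})$ for $w_0$. A direct induction on the recursion $\mu_{w_k^{\bold{i}}} = \mu_{w_{k-1}^{\bold{i}}} - n_k\, w_{k-1}^{\bold{i}}(\alpha_{i_k}^\vee)$, using that $\st$ commutes with the $W$-action on $X_*$ and sends simple coroots to simple coroots, gives $\st(A^{\bold{i}}(n_\bullet)) = A^{\st(\bold{i})}(n_\bullet)$. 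Hence the MV cycle attached to $\st(P)$ has $\st(\bold{i})$-Lusztig datum $n_\bullet$, so by the definition of the Lusztig transform the $\bold{i}$-Lusztig datum of $\st(P)$ equals $R^{\bold{i}}_{\st(\bold{i})}(n_\bullet)$. Since $\tau_{\bold{i}}$ is bijective, $\st(P) = P$ is equivalent to $R^{\bold{i}}_{\st(\bold{i})}(n_\bullet) = n_\bullet$.

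Next I would exploit the block structure. The reduced word $\bold{i}$ decomposes as a concatenation $\bold{i}^{(1)} \cdots \bold{i}^{(m)}$, where $\bold{i}^{(k)}$ is the chosen reduced expression of $s_{\eta_k}$ as a word in $\{s_i : i \in \eta_k\}$. Applying $\st$ blockwise, the $k$th block of $\st(\bold{i})$ is $\st(\bold{i}^{(k)})$, which is again a reduced expression of $s_{\eta_k}$ in $W_{\eta_k}$ since $\st(s_{\eta_k}) = s_{\eta_k}$. By Matsumoto's theorem applied inside each parabolic $W_{\eta_k}$, one can connect $\st(\bold{i})$ to $\bold{i}$ by a path of braid moves that operates inside each block independently. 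Because each braid move produces only a local change on the Lusztig datum (by the description in \cite{K1}), the transformation $R^{\bold{i}}_{\st(\bold{i})}$ decomposes as a product of block transformations, the $k$th acting on the $r_{\eta_k}$ consecutive entries of $n_\bullet$ corresponding to block $k$. Thus the global fixed-point condition reduces to $m$ independent local fixed-point conditions.

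Finally, for each block I would check that the fixed set of the corresponding transformation is precisely the diagonal (constant tuples). The possible orbit types are: (i) $|\eta_k| = 1$, where the condition is vacuous; (ii) $\eta_k = \{i,j\}$ with $a_{ij} = 0$, where the block transformation swaps two commuting entries, so the condition is that the two entries agree; (iii) $\eta_k = \{i,j\}$ with $a_{ij} = -1$, where the blocks $s_i s_j s_i$ and $s_j s_i s_j$ are exchanged and the transformation is the rank-two braid move $(a,b,c) \mapsto (b+c-\min(a,c),\, \min(a,c),\, a+b-\min(a,c))$; solving the fixed-point equations forces $b = \min(a,c)$, then $a = c$, and finally $b = a$; (iv) the triality case on $D_4$ with $|\eta_k| = 3$ and three pairwise-commuting generators, where the transformation is a cyclic permutation of the three entries, with fixed set the diagonal. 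This exhausts the orbit types. The principal non-formal step is the nonlinear $A_2$ case (iii), which requires unpacking the tropical braid move; a secondary delicate point is the block-localization of $R^{\bold{i}}_{\st(\bold{i})}$, which relies on the fact that a braid sequence inside a parabolic can be chosen to stay inside that parabolic.
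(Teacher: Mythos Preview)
Your argument is correct and shares the paper's overall architecture---block reduction along the factorization $w_0=s_{\eta_1}\cdots s_{\eta_m}$ followed by a case check on the possible orbit types---but it packages the forward direction differently. The paper first invokes Lemma~\ref{equiv} to get $\st(\mu_w)=\mu_{\st(w)}$ for all $w$, and then observes that for each $k$ the convex hull of $\{\mu_w : w \in s_{\eta_1}\cdots s_{\eta_{k-1}} W_{\eta_k}\}$ is itself a $\st$-invariant MV polytope of type $R_{\eta_k}$, so the question reduces at the polytope level to the $A_2$ and $A_1^\ell$ cases. You instead encode $\st$-invariance once and for all as the fixed-point equation $R^{\bold{i}}_{\st(\bold{i})}(n_\bullet)=n_\bullet$ and then decompose that transform blockwise via a block-local braid path (legitimate because the Lusztig transform is path-independent). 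Your route is a bit more self-contained, since it sidesteps the claim---asserted but not argued in the paper---that those sub-hulls are genuine MV polytopes for the corresponding Levi, at the price of having to unpack the tropical $A_2$ braid move explicitly. The converse direction is essentially identical in both proofs.
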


\begin{proof}

For any orbit $\eta$ of $\st$, let $R_\eta$ be the root system
generated by $\{\alpha_i;i\in \eta\}$. Let $W_\eta$ be the Coxeter
group generated by \{$s_i$, for $i\in \eta$\}. Then $s_\eta$ is the
longest element in $W_\eta$.

Recall that $n_k$ means the length of the edge connecting
$\mu_{w^\bold{i} _{k-1}}$ with $\mu_{w^\bold{i}_k}$, i.e.
$\mu_{w^{\bold{i}}_k}-\mu_{w^{\bold{i}}_{k-1}}=-n_k.w^{\bold{i}}_{k-1}(\alpha^\vee
_{i_k})$. The convex hull of $\{\mu_w|w\in W_{\eta_1} \}$ forms an
MV polytope for an algebraic group of type $R_{\eta_1}$. We denote
it by $P^1 _{\eta_1}$. From $\mu_{w^\bold{i} _0},\cdots,
\mu_{w^\bold{i}_{r_{\eta_1}}}$, we get a Lusztig datum
$(n_1,n_2,\cdots,n_{r_{\eta_1}})$ along the chosen reduced word of
$s_\eta$. The convex hull of $\{\mu_w|w=s_{\eta_1}y, \text{ for }
y\in W_{\eta_2} \}$ forms an MV polytope of type $R_{\eta_2}$. We
denote it by $P^2 _{\eta_2}$. From $\mu_{w^\bold{i}_{r_{\eta_1}+1}
},\cdots, \mu_{w^\bold{i}_{r_{\eta_1}+r_{\eta_2}}}$, we get a
Lusztig datum
$(n_{r_{\eta_1}+1},n_{r_{\eta_1}+2},\cdots,n_{r_{\eta_1}+r_{\eta_2}})$
along the chosen reduced word of $s_{\eta_2}$. Similarly, we get
subsequently MV polytopes $P^3 _{\eta_3},\cdots, P^m _{\eta_m}$,
with type $R_{\eta_3},\cdots, R_{\eta_m}$. We also get their
corresponding Lusztig data along the chosen reduced words of
$s_{\eta_i}$.

Now let us return to the proof. If $P$ is $\st$-invariant, we have
$\st(\mu_w)=\mu_{\st(w)}$, for all $w\in W$, by Lemma \ref{equiv}.
Applying Lemma \ref{equiv} again, we see that $P^k _{\eta_k}$, for
all $k$, are $\st$-invariant.

Note that there are two possibilities: $A_2$ and $A_1\times
A_1\times \cdots \times A_1$ ( with $l$ copies of $A_1$, where
$\ell=$ 2 or 3) for $R_\eta$. Hence the sufficient part is reduced
to the following two cases which are easy to check.
\begin{enumerate}
\item
$A_2$,
 if $P$ is $\st$-invariant, then $n_1=n_2=n_3$ .
\item
$A_1\times A_1 \times\cdots \times A_1$, if $P$ is $\st$-invariant,
then $n_1=n_2=\cdots =n_l$.
\end{enumerate}

Conversely, from $A^\bold{i}(n_\bullet)=\cap_k S_{w_k ^\bold{i}}
^{\mu_{w_k ^{\bold{i}}}}$, we have
$\st(A^\bold{i}(n_\bullet))=A^\bold{j}(n_\bullet$), where
$\bold{j}=(\st(i_1),\st(i_2),\cdots,\st(i_m))$. From the condition
of $n_\bullet$, it is easy to see
$R^{\bold{j}}_{\bold{i}}(n_\bullet)=n_\bullet$. Hence their closures
coincide, i.e. the corresponding MV cycle of this $\bold{i}$-Lusztig
datum is $\st$-invariant. By Lemma \ref{equiv}, $P$ is
$\st$-invariant.
\end{proof}

\subsection{The bijection between MV cycles (polytopes) for a symmetric pair}\label{section 4.4}

Let $P$ be a $\st$-invariant MV polytope for $G$. In this
subsection, we will show that $P^\st$ is an MV polytope for $G^\st$,
and then we get the bijection between MV polytopes for a symmetric
pair.

Consider the symmetric pair $(A_4,B_2)$. For the longest element in
the Weyl group $W$, we have reduced expressions $w_0=s_1s_4\cdot
s_2s_3s_2\cdot s_1s_4\cdot s_2s_3s_2 =s_2s_3s_2\cdot s_1s_4\cdot
s_2s_3s_2 \cdot s_1s_4$. We get two reduced words $\bold{i}_\st$ and
$\bold{i}'_\st$ for $G^\st$ from these two expressions of $w_0$.
From $\bold{i}_\st$, and $\bold{i}'_\st$, we naturally get 2 reduced
words for $G$, $\bold{i}=(1,4,2,3,2,1,4,2,3,2)$,
$\bold{i}'=(2,3,2,1,4,2,3,2,1,4)$, respectively. Let $n_\bullet$,
$n'_\bullet$ be Lusztig data along $\bold{i}$, and $\bold{i}'$ for
$P$, respectively. According to Proposition \ref{prop:invariant}, we
may write $n_\bullet$ and $n'_\bullet$ as follows
\begin{eqnarray*}
&&n_\bullet=(\bar{n}_1,\bar{n}_1,\bar{n}_2,\bar{n}_2,\bar{n}_2,\bar{n}_3,\bar{n}_3,\bar{n}_4,\bar{n}_4,\bar{n}_4)\in \mathbb{N}^{10},\\
&&n'_\bullet=(\bar{n}'_1,\bar{n}'_1,\bar{n}'_1,\bar{n}'_2,\bar{n}'_2,\bar{n}'_3,\bar{n}'_3,\bar{n}'_3,\bar{n}'_4,\bar{n}'_4)\in
\mathbb{N}^{10},
\end{eqnarray*}
where $\bar{n}_k$, $\bar{n}'_k$ are non negative integers.

Set $n^\st_\bullet=(\bar{n}_1,\bar{n}_2,\bar{n}_3,\bar{n}_4)$. By
sending $n_\bullet$ to $n^\st_\bullet$, we get a bijection between
$\bold{i}$-Lusztig data of $\st$-invariant MV polytopes for $G$ and
$\bold{i}_\st$-Lusztig data of MV polytopes for $G^\st$. We shall
show this bijection is intrinsic, and independent of the choice of
reduced words. Note that the above procedure works for general case.

For any subvariety $Y \subset \G$, we set $Y^\st:=\{y\in Y|\st(y)=y
\}$.

Let $B(n_\bullet)=\{(b_\bullet)\in
\mathcal{K}^{\ell(w_0)}|\text{val}(b_k)=n_k, \forall\text{ } k \}$
and $B_\st(n^\st_\bullet)=\{(b_\bullet)\in
\mathcal{K}^{\ell_\st(w_0)}|\text{val}(b_k)=\bar{n}_k, \forall\text{
} k \}$, where val is the valuation function on $\mathcal{K}$.
Define a map $j_\st$ from $B_\st(n^\st_\bullet)$ to $B(n_\bullet)$,
by $
j_\st(b_1,b_2,b_3,b_4)=(b_1,b_1,b_2,2b_2,b_2,b_3,b_3,b_4,2b_4,b_4)$.

In this subsection, we always assume that $\bold{i}$ and $\bold{i'}$
are reduced words of $G$ resulting from the reduced words of
$G^\st$, $\bold{i}_\st$ and $\bold{i'}_\st$ respectively, in the
sense of Proposition \ref{prop:invariant}.

\begin{lem}\label{fixed}
Let $n_\bullet$ be a $\st$-invariant $\bold{i}$-Lusztig datum. Then
$A^\bold{i}(n_\bullet)^\st=A^{\bold{i}_\st}(n^\st_\bullet)$.
\end{lem}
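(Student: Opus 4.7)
The strategy is to work block-by-block along the reduced word $\bold{i}$, using Corollary \ref{cor} to interchange $\st$-fixed points of $\st$-invariant semi-infinite cells in $\G$ with semi-infinite cells in $\G_\st$.

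First I would reconcile the vertex data computed on the two sides. By Proposition \ref{prop:invariant}, $n_\bullet$ is constant on each block, and the partial product at the end of the $j$-th block is $w^\bold{i}_{k_j} = s_{\eta_1}\cdots s_{\eta_j} = w^{\bold{i}_\st}_j \in W^\st$, so in particular $(w^\bold{i}_{k_j}, \mu_{w^\bold{i}_{k_j}})$ is $\st$-invariant. A short block-by-block computation then shows that the telescoping sum of the $G$-recursion steps inside the $j$-th block reproduces the single $G^\st$-recursion step. For an $A_2$ block with $\eta=\{i,j\}$ and chosen word $(i,j,i)$, the factor $2^h=2$ in $\alpha^\vee_\eta = 2(\alpha^\vee_i+\alpha^\vee_j)$ arises because each of $v(\alpha^\vee_i)$ and $v(\alpha^\vee_j)$, with $v = w^{\bold{i}_\st}_{j-1}$, appears exactly twice in the telescoping sum; the $A_1^\ell$ case is trivial since the coroots commute. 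Thus $\mu_{w^\bold{i}_{k_j}} = \mu_{w^{\bold{i}_\st}_j}$ at every corner.

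Second I would establish the inclusion $A^\bold{i}(n_\bullet)^\st \subseteq A^{\bold{i}_\st}(n^\st_\bullet)$. Since taking $\st$-fixed points commutes with intersections,
\[
A^\bold{i}(n_\bullet)^\st = \bigcap_k \bigl(S^{\mu_{w^\bold{i}_k}}_{w^\bold{i}_k}\bigr)^\st \subseteq \bigcap_j \bigl(S^{\mu_{w^\bold{i}_{k_j}}}_{w^\bold{i}_{k_j}}\bigr)^\st,
\]
and at each corner both $w^\bold{i}_{k_j}$ and $\mu_{w^\bold{i}_{k_j}}$ are $\st$-invariant, so Corollary \ref{cor} identifies the right-hand factor with $(S_\st)^{\mu_{w^{\bold{i}_\st}_j}}_{w^{\bold{i}_\st}_j}$; intersecting over $j$ then gives $A^{\bold{i}_\st}(n^\st_\bullet)$.

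The reverse containment is the main obstacle. A point $y \in A^{\bold{i}_\st}(n^\st_\bullet) \subseteq \G_\st \subseteq \G^\st$ is automatically $\st$-fixed and, by Corollary \ref{cor}, lies in each corner cell $S^{\mu_{w^\bold{i}_{k_j}}}_{w^\bold{i}_{k_j}}$; what remains is to show it lies in the intermediate cells $S^{\mu_{w^\bold{i}_k}}_{w^\bold{i}_k}$ for non-corner indices $k$. I plan to proceed block-by-block via the explicit coordinatization underlying the map $j_\st$: a point of $A^\bold{i}(n_\bullet)$ can be written as a product of root-subgroup expressions along $\bold{i}$ with parameters in $B(n_\bullet)$, and a point of $A^{\bold{i}_\st}(n^\st_\bullet)$ analogously using the $x_\eta$ or $y_\eta$ of $G^\st$ from Section \ref{section 2.2} with parameters in $B_\st(n^\st_\bullet)$. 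The identity $x_i(a)x_j(2a)x_i(a) = x_\eta(a)$, together with its commuting counterpart for $A_1^\ell$, is precisely what makes $j_\st$ intertwine the two parameterizations. Hence $y$ acquires a $G$-presentation along $\bold{i}$ whose successive partial products witness $y \in S^{\mu_{w^\bold{i}_k}}_{w^\bold{i}_k}$ for every $k$, giving the reverse containment and completing the proof.
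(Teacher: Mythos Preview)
Your argument is correct, and in fact it is slightly cleaner than the paper's. Both you and the paper establish the inclusion $A^{\bold{i}_\st}(n^\st_\bullet)\subseteq A^\bold{i}(n_\bullet)^\st$ via the explicit parameterizations $\pi_\bold{i}$, $\pi_{\bold{i}_\st}$ and the identity $\iota\circ\pi_{\bold{i}_\st}=\pi_\bold{i}\circ j_\st$ coming from the formulas $x_\eta(a)=\prod x_i(a)$ and $x_\eta(a)=x_i(a)x_j(2a)x_i(a)$; this is your step~3 and the paper's commutative diagram. The difference is in the reverse inclusion. You obtain $A^\bold{i}(n_\bullet)^\st\subseteq A^{\bold{i}_\st}(n^\st_\bullet)$ directly: drop the non-corner factors from $\bigcap_k(S^{\mu_{w^\bold{i}_k}}_{w^\bold{i}_k})^\st$, observe that the corner data $(w^\bold{i}_{k_j},\mu_{w^\bold{i}_{k_j}})$ are $\st$-invariant, and apply Corollary~\ref{cor}. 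The paper instead proves this inclusion indirectly, by decomposing $X(\mu)^\st$ in two ways (once as $\bigsqcup A^\bold{i}(n'_\bullet)^\st$ over all $n'_\bullet$, once as $\bigsqcup A^{\bold{i}_\st}(m_\bullet)$) and matching the pieces. Your route is more local and avoids the global stratification; the paper's route has the advantage that Corollary~\ref{nofixed} (emptiness of $A^\bold{i}(n_\bullet)^\st$ when $n_\bullet$ is not $\st$-invariant) falls out of the same counting, whereas with your argument that corollary would need its own short justification.
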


\begin{proof}
We only show this lemma for the pair $(A_4,B_2)$, and the following
argument works in general.

Let $\iota :A^{\bold{i}_\st}(n^\st_\bullet)\hookrightarrow\G$ be the
natural imbedding, which is the restriction of $\iota: \G_\st
\hookrightarrow \G$. We have surjections
$\pi_{\bold{i}_\st}:B_\st(n^\st_\bullet)\rightarrow
A^{\bold{i}_\st}(n^\st_\bullet)$, and $\pi_\bold{i}: B(n_\bullet)
\rightarrow A^\bold{i}(n_\bullet)$, which are given by
\begin{equation*}
 \pi_{\bold{i}_\st}(b_1,b_2,b_3,b_4)=[\eta_{w_0} ^{-1}(
x_{\eta_1}(b_1)x_{\eta_2}(b_2)x_{\eta_1}(b_3)x_{\eta_2}(b_4))],
\end{equation*}
\begin{eqnarray*}
&\pi_\bold{i}(b_1,b_1,b_2,2b_2,b_2,b_3,b_3,b_4,2b_4,b_4)=\\
&[\eta_{w_0}^{-1}(x_1(b_1)x_4(b_1)\D x_2(b_2)x_3(2b_2)x_2(b_2)\D
x_1(b_3)x_4(b_3)\D x_2(b_4)x_3(2b_4)x_2(b_4))],
\end{eqnarray*}
where $x_{\eta_1}$ and $x_{\eta_2}$ are root subgroup homomorphisms
for $G^\st$, and we denote by $[\text{ }]$ the projection from
$G(\mathcal{K})$ to $\G$. For the definition of $\eta_{w_0}$, see
(section 4.4, \cite{K1}). Since $x_1(b_i)x_4(b_i)=x_{\eta_1}(b_i)$,
for i=1 or 3, and $x_2(b_j)x_3(2b_j)x_2(b_j)=x_{\eta_2}(b_j)$, for
j=2 or 4, we can see that $\iota\circ
\pi_{\bold{i}_\st}=\pi_\bold{i}\circ j_\st$, i.e., we have the
following commutative diagram
\[ \begin{CD}
@. B_\st(n^\st_\bullet) @>{j_\st}>>  B(n_\bullet)@. \\
@. @VV{\pi_{\bold{i}_\st}}V @VV{\pi_\bold{i}}V @.\\
@.A^{\bold{i}_\st}(n^\st_\bullet) @>{\iota}>>
  A^\bold{i}(n_\bullet).
\end{CD} \]

Since $\pi_{\bold{i}_\st}(
B_\st(n^\st_\bullet))=A^{\bold{i}_\st}(n^\st_\bullet)$, we have
$A^{\bold{i}_\st}(n^\st_\bullet)\subset A^\bold{i}(n_\bullet)^\st$.

Assume $n_\bullet$ is of coweight $\mu$. It is known that
$X(\mu)=S_e ^0 \cap S_{w_0} ^\mu=\bigsqcup
A^{\bold{i}}(n'_\bullet)$, where the union is taken over
$n'_\bullet$, such that $n'_\bullet$ is an $\bold{i'}$-Lusztig datum
with coweight $\mu$. Hence we have
\begin{equation}\label{equ1}
X(\mu)^\st=\bigsqcup A^\bold{i}(n'_\bullet)^\st,
\end{equation}
where $A^\bold{i}(n_\bullet)$ appear in the right hand side.

From Corollary \ref{cor}, we have the decomposition
\begin{equation}\label{equ2}
X(\mu)^\st=\bigsqcup A^{\bold{i}_\st}(m_\bullet),
\end{equation}
where the union is taken over $m_\bullet$ such that $m_\bullet$ is
an $\bold{i}_\st$-Lusztig datum with coweight $\mu$.

Let $m_\bullet=(m_1,m_2,m_3, m_4)$ be an $\bold{i}_\st$-Lusztig
datum, such that $\overline{A^\bold{i_\st}(m_\bullet)}$ is an MV
cycle for $G^\st$ with coweight $\mu$. Let
$n''_\bullet=(m_1,m_1,m_2,m_2,m_2,m_3,m_3,m_4,m_4,m_4)$. Then
$n''_\bullet$ is $\st$-invariant, and hence
$A^\bold{i_\st}(m_\bullet)\subset A^\bold{i}(n''_\bullet)^\st$. By
comparing decompositions of $X(\mu)^\st$ in (\ref{equ1}) and
(\ref{equ2}), we obtain
$A^\bold{i}(n_\bullet)^\st=A^{\bold{i}_\st}(n^\st_\bullet)$.
\end{proof}

\begin{rem}
From this lemma, we see that the closure of the fixed point set of
$\st$ on some open subset of a $\st$-invariant MV cycle $C$ is an MV
cycle for $G^\st$. We \textsf{believe} that the fixed point set of
$\st$ on $\st$-invariant MV cycle for $G$ is an MV cycle for $
G^\st$.
\end{rem}

\begin{cor}\label{nofixed}
  If $\overline{ A^\bold{i}(n_\bullet) }$ is not $\st$-invariant, then $A^\bold{i}(n_\bullet)^\st$ is
  empty.
\end{cor}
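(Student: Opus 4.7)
The strategy is to compare the two disjoint decompositions of the fixed-point locus $X(\mu)^\st$ already in play---the one from $\bold{i}$-Lusztig data applied to $X(\mu)$ and then intersected with $\G^\st$ (equation \ref{equ1}), and the one coming from $\bold{i}_\st$-Lusztig data via Corollary \ref{cor} (equation \ref{equ2})---and to show that the $\st$-invariant pieces in (\ref{equ1}) already cover $X(\mu)^\st$. This will force every piece coming from a non-$\st$-invariant Lusztig datum to be empty.

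First I would reduce the hypothesis on the MV cycle to a combinatorial statement about $n_\bullet$. By Lemma \ref{equiv}, $\overline{A^\bold{i}(n_\bullet)}$ is $\st$-invariant if and only if its MV polytope $P$ is $\st$-invariant, and Proposition \ref{prop:invariant} then reads this off the Lusztig datum as the repeating block pattern. Hence the hypothesis that $\overline{A^\bold{i}(n_\bullet)}$ is not $\st$-invariant is equivalent to $n_\bullet$ not being a $\st$-invariant $\bold{i}$-Lusztig datum. Let $\mu$ be the coweight. If $\mu \notin X_*^\st$, then a short disjointness argument using $\st(S_{w_0}^\mu) = S_{w_0}^{\st(\mu)}$ shows $X(\mu)^\st = \O$ and the corollary is vacuous, so I may assume $\mu \in X_*^\st$.

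Next I would combine (\ref{equ1}) and (\ref{equ2}), both of which are disjoint decompositions of the same set $X(\mu)^\st$. By Proposition \ref{prop:invariant}, the assignment $n'_\bullet \mapsto (n'_\bullet)^\st$ is a bijection between $\st$-invariant $\bold{i}$-Lusztig data of coweight $\mu$ and $\bold{i}_\st$-Lusztig data of coweight $\mu$ (the coweights match because collapsing the repeating pattern replaces each block $(n,\ldots,n)$ for a fixed-point root $\alpha^\vee_\eta$ by the single entry $n$ for the corresponding $G^\st$-coroot). For every such $\st$-invariant $n'_\bullet$, Lemma \ref{fixed} identifies $A^\bold{i}(n'_\bullet)^\st = A^{\bold{i}_\st}((n'_\bullet)^\st)$. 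Plugging this into (\ref{equ1}) shows that the sub-union taken only over $\st$-invariant $n'_\bullet$ already equals the right-hand side of (\ref{equ2}), i.e. all of $X(\mu)^\st$. By the disjointness in (\ref{equ1}), every remaining piece $A^\bold{i}(n'_\bullet)^\st$ with $n'_\bullet$ not $\st$-invariant must then be empty; applied to the given $n_\bullet$ this gives $A^\bold{i}(n_\bullet)^\st = \O$. The main obstacle is really just the bookkeeping in this last step---verifying that the bijection of Lusztig data does preserve the coweight $\mu$, so that the two decompositions of $X(\mu)^\st$ are indexed compatibly---since once this is in place the emptiness falls out purely from counting the disjoint pieces.
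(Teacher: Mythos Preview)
Your argument is correct and is exactly the reasoning implicit in the paper: the corollary is stated without a separate proof because it falls out of the comparison of the decompositions (\ref{equ1}) and (\ref{equ2}) already carried out in the proof of Lemma~\ref{fixed}, together with the bijection of $\st$-invariant $\bold{i}$-Lusztig data with $\bold{i}_\st$-Lusztig data from Proposition~\ref{prop:invariant}. Your extra bookkeeping (the reduction via Lemma~\ref{equiv} and Proposition~\ref{prop:invariant}, the case $\mu\notin X_*^\st$, and the coweight-preservation check) just makes explicit what the paper leaves to the reader.
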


\begin{lem} \label{lem:nonempty}
If $n_\bullet$ is a $\st$-invariant $\bold{i}$-Lusztig datum, and
$R_\bold{i} ^{\bold{i}'} (n_\bullet)=n'_\bullet$, then
$(A^\bold{i}(n_\bullet)\cap A^{\bold{i}'}(n'_\bullet))^\st$ contains
an open dense subset.
\end{lem}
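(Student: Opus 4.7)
The plan is to reduce the statement to a density assertion at the level of $G^\st$ via Lemma~\ref{fixed}, and then to invoke Kamnitzer's characterization of the Lusztig transform for $G^\st$.

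First, I would show that $n'_\bullet = R^{\bold{i}'}_{\bold{i}}(n_\bullet)$ is itself a $\st$-invariant $\bold{i}'$-Lusztig datum. Since $n_\bullet$ is $\st$-invariant, the corresponding MV polytope $P$ is $\st$-invariant by Proposition~\ref{prop:invariant}. But $n'_\bullet$ is the $\bold{i}'$-Lusztig datum of the same $P$, and $\bold{i}'$ is built from a reduced expression of $w_0$ in $W^\st$, so applying Proposition~\ref{prop:invariant} along $\bold{i}'$ shows that $n'_\bullet$ has the required block-constant structure; hence the reduction ${n'}^\st_\bullet$ is well-defined.

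Second, applying Lemma~\ref{fixed} to both $(\bold{i}, n_\bullet)$ and $(\bold{i}', n'_\bullet)$, I get $A^\bold{i}(n_\bullet)^\st = A^{\bold{i}_\st}(n^\st_\bullet)$ and $A^{\bold{i}'}(n'_\bullet)^\st = A^{\bold{i}'_\st}({n'}^\st_\bullet)$, whence
$$(A^\bold{i}(n_\bullet)\cap A^{\bold{i}'}(n'_\bullet))^\st = A^{\bold{i}_\st}(n^\st_\bullet) \cap A^{\bold{i}'_\st}({n'}^\st_\bullet).$$
This is an open subset of the irreducible variety $A^{\bold{i}_\st}(n^\st_\bullet)$ (the latter is the image of the irreducible $B_\st(n^\st_\bullet)$ under $\pi_{\bold{i}_\st}$), so openness is automatic and it suffices to prove non-emptiness.

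Third, the non-emptiness would follow from the folding-compatibility of Lusztig transforms, namely $R^{\bold{i}'_\st}_{\bold{i}_\st}(n^\st_\bullet) = {n'}^\st_\bullet$, where the left-hand side is the Lusztig transform for $G^\st$. Once this is in place, Kamnitzer's criterion applied to the reductive group $G^\st$ gives that $A^{\bold{i}_\st}(n^\st_\bullet) \cap A^{\bold{i}'_\st}({n'}^\st_\bullet)$ is open and dense in $A^{\bold{i}_\st}(n^\st_\bullet)$, and the lemma is proved.

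The main obstacle is the folding-compatibility in the third step. The strategy is to decompose a chosen path of braid moves from $\bold{i}$ to $\bold{i}'$ into two kinds of moves: internal braid moves within a single block $s_\eta$, and inter-block braid moves. On $\st$-invariant data all entries in a block are equal, so internal moves preserve the block-constant structure tautologically. Inter-block moves correspond to braid moves in $W^\st$ connecting $\bold{i}_\st$ and $\bold{i}'_\st$, and here one verifies, using the case analysis from the proof of Proposition~\ref{prop:invariant} (block types $A_2$ and $A_1\times\cdots\times A_1$), that the explicit braid-move transform formula for $G$, restricted to $\st$-invariant data, coincides with the corresponding transform formula for $G^\st$ acting on $n^\st_\bullet$. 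This case-by-case verification is the only genuinely technical step.
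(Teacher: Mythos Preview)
Your route is genuinely different from the paper's. The paper does not compare Lusztig transforms for $G$ and $G^\st$ at this stage; instead it works entirely on the $G$ side, writing down the explicit birational transition maps on $B(n_\bullet)$ attached to each $2$- and $3$-braid move in $W$ and composing them along a chosen path $\bold{i}\to\bold{i}'$. The denominators are polynomials with nonnegative integer coefficients, so their product $F$ does not vanish at the specific $\st$-invariant element $y\in B(n_\bullet)$ produced by $j_\st$ (as in Lemma~\ref{fixed}); hence $\pi_\bold{i}(y)$ is a $\st$-fixed point of $A^\bold{i}(n_\bullet)\cap A^{\bold{i}'}(n'_\bullet)$. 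Openness of $\pi_\bold{i}(U)$ and irreducibility of $A^\bold{i}(n_\bullet)^\st$ (from Lemma~\ref{fixed}) then give density. In the paper's logical order, the folding-compatibility $R^{\bold{i}'_\st}_{\bold{i}_\st}(n^\st_\bullet)={n'}^\st_\bullet$ that you want to use is a \emph{corollary} of Theorem~\ref{polytope}, which itself rests on the present lemma; it is an output of the argument, not an input.

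Your approach would also succeed, but you underestimate the third step. A single braid move $s_{\eta_1}s_{\eta_2}\cdots\leftrightarrow s_{\eta_2}s_{\eta_1}\cdots$ in $W^\st$ does not correspond to one braid move in $W$: it unfolds into a long sequence of $2$- and $3$-moves, generally passing through reduced words that are \emph{not} of block form and through Lusztig data that are not block-constant. In the $(A_4,B_2)$ example the $W^\st$-move has length four, the $W$-rewriting involves a word of length ten, and the composite tropical map on the $\st$-invariant locus must then be matched against the $B_2$ formulas of \cite{K1}; the $(D_4,G_2)$ case is worse. So what you call a ``case-by-case verification'' is in fact a global computation for each rank-two pair arising from folding, not a local check on two block types. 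It is doable, and once done your argument is conceptually cleaner; the paper's explicit fixed-point construction simply sidesteps this computation, trading conceptual transparency for a shorter proof.
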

\begin{proof}
We can change $\bold{i}$ to $\bold{i}'$ by combining several braid
$d$-moves.

If $(\cdots,i_k,i_{k+1},i_{k+2},i_{k+3},\cdots) \mapsto
(\cdots,i_k,i_{k+2},i_{k+1},i_{k+3},\cdots)$, $(d=2)$, define a
rational map from $B(n_\bullet)$ to $B(n'_\bullet)$, by
\begin{equation*}
(\cdots,b_k,b_{k+1},b_{k+2},b_{k+3},\cdots)\mapsto(\cdots,b_k,b_{k+2},b_{k+1},b_{k+3},\cdots).
\end{equation*}

 If $(\cdots,i_k,i_{k+1},i_{k+2},i_{k+3},i_{k+4},\cdots) \mapsto
(\cdots,i_k,i_{k+2},i_{k+1},i_{k+2},i_{k+4},\cdots)$, $(d=3)$, where
$i_{k+1}=i_{k+3}$, then we define a rational map from $B(n_\bullet)$
to $B(n'_\bullet)$ by
\begin{eqnarray*}
(\cdots,b_k,b_{k+1},b_{k+2},b_{k+3},b_{k+4}\cdots)\mapsto(\cdots,
b_k,\frac{b_{k+2}b_{k+3}}{b_{k+1}+b_{k+3}},b_{k+1}+b_{k+3},\frac{b_{k+1}b_{k+2}}{b_{k+1}+b_{k+3}},b_{k+4},\cdots).
\end{eqnarray*}

It is well-known that, by several braid $d$-moves, we can arrive at
$\bold{i}'$ from $\bold{i}$. Let $\bold{i}\mapsto \bold{i}_1 \mapsto
\bold{i}_2 \mapsto \cdots \mapsto \bold{i}'$ be one such path, where
$\mapsto$ represents a braid $d$-move. For a path from $\bold{i}$ to
$\bold{i}'$, we denote the rational map $f$ by combining those in
every step defined above. Assume
$f(b_1,\cdots,b_m)=(b'_1,\cdots,b'_m)$. It is easy to see that
$b'_k$ is a rational function with numerator and denominator as
nonzero polynomials with nonnegative integral coefficients. Consider
the diagram
\begin{eqnarray*}
 && B(n_\bullet)\dashrightarrow B(n'_\bullet)\\
 && \downarrow \pi_\bold{i }\quad\quad\quad\quad \downarrow \pi_{\bold{i}'}\\
 && A^\bold{i}(n_\bullet)\dashrightarrow  A^{\bold{i}'}(n'_\bullet)
\end{eqnarray*}
where $\pi_\bold{i}$ is as in the proof of Lemma \ref{fixed}, and
dashed arrows denote rational maps. We have
$\pi_\bold{i}=\pi_{\bold{i}'}\circ f$.

Let $F$ be the product of all denominators appearing in every step
of $d$-moves, so it is a nonzero polynomial with nonnegative
integral coefficients. Let $U=\{(b_\bullet)\in
B(n_\bullet)|F(b_\bullet)\neq 0\}$. Then $f$ is well-defined on $U$,
and so $\pi_\bold{i}(U)\subset A^\bold{i}(n_\bullet)\cap
A^{\bold{i}'}(n'_\bullet)$.

There exists $y\in U$, such that $\pi_\bold{i}(y)\in
\pi_\bold{i}(U)\subset A^\bold{i}(n_\bullet)\cap
A^{\bold{i}'}(n'_\bullet)$, and $\pi_\bold{i}(y)$ is
$\st$-invariant. Hence $(A^\bold{i}(n_\bullet)\cap
A^{\bold{i}'}(n'_\bullet))^\st$ is nonempty. Since $\pi_\bold{i}$ is
an open map, $\pi_\bold{i}(U)$ is open in $A^\bold{i}(n_\bullet)$.
We  only show it in the case of $(A_4,B_2)$. Since
$\overline{A^\bold{i}(n_\bullet)}$ is $\st$-invariant, we have
$n_\bullet=(\bar{n}_1,\bar{n}_1,\bar{n}_2,\bar{n}_2,\bar{n}_2,\bar{n}_3,\bar{n}_3,\bar{n}_4,\bar{n}_4,\bar{n}_4)$.
Now take
$y=(t^{\bar{n}_1},t^{\bar{n}_1},t^{\bar{n}_2},2t^{\bar{n}_2},t^{\bar{n}_2},t^{\bar{n}_3},t^{\bar{n}_3},t^{\bar{n}_4},2t^{\bar{n}_4},t^{\bar{n}_4})\in
B(n_\bullet)$, then $F(y)\neq 0$. In the general case, we have the
similar argument.

Since $A^\bold{i}(n_\bullet)$ is irreducible by Lemma $\ref{fixed}$,
we have $(A^{\bold{i}}(n_\bullet)\cap
A^{\bold{i}'}(n'_\bullet))^\st$ is dense in
$A^{\bold{i}}(n_\bullet)^\st$.
\end{proof}


\begin{lem}\label{lem:pseudo}
Let $\text{\rm{Conv}}((\mu_w)_{w\in W^\st})$ be the convex hull of
$(\mu_w)_{w\in W^\st}$ in $t_\R$. If the MV polytope
$P=\text{\rm{Conv}}((\mu_w)_{w\in W})$ is $\st$-invariant, then
$P^\st=\text{\rm{Conv}}((\mu_w)_{w\in
  W^\st})$.
\end{lem}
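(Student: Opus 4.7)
The plan is to prove the two inclusions separately. The inclusion $\text{Conv}((\mu_w)_{w\in W^\st}) \subseteq P^\st$ is immediate from Lemma \ref{equiv}: for $v \in W^\st$, $\st(\mu_v) = \mu_{\st(v)} = \mu_v$, so each $\mu_v$ lies in $P \cap t_\R^\st = P^\st$, and convexity of $P^\st$ yields the claim.

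For the reverse inclusion I would reduce the problem to a cone comparison, and the rest is formal. The key lemma is: if $\xi \in X_*^\st \otimes \R$ lies in the $G$-positive coroot cone (nonneg $\R$-combination of the $\alpha_i^\vee$), then $\xi$ lies in the $G^\st$-positive coroot cone. Since the $\alpha_i^\vee$ are linearly independent, the expansion $\xi = \sum_i c_i \alpha_i^\vee$ with $c_i \geq 0$ is unique; $\st$-invariance of $\xi$ combined with $\st(\alpha_i^\vee) = \alpha_{\st(i)}^\vee$ forces $c_i = c_{\st(i)}$. Regrouping by orbits and invoking $\alpha_\eta^\vee = 2^{h_\eta} \sum_{i\in \eta} \alpha_i^\vee$ rewrites $\xi = \sum_\eta c_\eta 2^{-h_\eta} \alpha_\eta^\vee$ with nonneg coefficients.

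Using this, take any $x \in P^\st$ and $v \in W^\st$. The pseudo-Weyl description of $P$ gives $x \leq_v \mu_v$, i.e.\ $v^{-1}(\mu_v - x)$ is in the $G$-positive coroot cone; since $v \in W^\st$ preserves $X_*^\st$, this vector lies in $X_*^\st \otimes \R$, and the cone comparison places it in the $G^\st$-positive cone, giving $x \leq_v^{G^\st} \mu_v$. The same reasoning with $\mu_u$ in place of $x$ (for $u \in W^\st$) shows that $(\mu_v)_{v\in W^\st}$ is valid pseudo-Weyl data for $G^\st$; combined with the fact, already developed in \S\ref{section 4.4} via the $\bold{i}_\st$-Lusztig datum $n^\st_\bullet$, that these are the vertices of an MV polytope for $G^\st$, the associated GGMS stratum for $G^\st$ is nonempty. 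Kamnitzer's Lemma 2.3 in \cite{K1} (applied to $G^\st$) then identifies the pseudo-Weyl polytope $\{y \in t_\R^\st : y \leq_v^{G^\st} \mu_v,\ \forall v \in W^\st\}$ with $\text{Conv}((\mu_v)_{v\in W^\st})$, so $x$ belongs to this convex hull.

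The trickiest conceptual step is recognizing that the entire content of the statement is the positive-cone comparison; the cone comparison itself is a short calculation, but it uses in an essential way the factor $2^{h_\eta}$ in the definition of $\alpha_\eta^\vee$, which is specifically tailored to the $A_2$-type $\st$-orbits so that the regrouped coefficients remain nonnegative. Once this is in hand, everything else is a clean translation between the two root data along the $\st$-fixed subspace.
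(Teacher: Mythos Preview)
Your argument is correct in substance and runs dual to the paper's. Both proofs handle the easy inclusion the same way. For the reverse, the paper works on the \emph{weight} side: for $\beta\in P^\st$ and $w\in W^\st$ it reads off the pseudo-Weyl inequalities $\langle\beta,w\cdot\lambda_i\rangle\le\langle\mu_w,w\cdot\lambda_i\rangle$, observes that pairing a $\st$-fixed coweight against $\lambda_i$ factors through $\theta(\lambda_i)$ and hence through $\lambda_\eta$, and concludes that $\beta$ lies in the $G^\st$ pseudo-Weyl polytope $P((\mu_w)_{w\in W^\st})$. You instead work on the \emph{coroot} side, showing that the $\st$-fixed part of the $G$-positive coroot cone is contained in the $G^\st$-positive coroot cone. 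These are equivalent formulations of the same descent, and neither is intrinsically simpler; your version has the small advantage of making the pseudo-Weyl condition $\mu_u\le_v^{G^\st}\mu_v$ for $u,v\in W^\st$ explicit, which the paper leaves implicit.

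One point to tighten: your appeal to ``the fact, already developed in \S\ref{section 4.4} via the $\bold{i}_\st$-Lusztig datum $n^\st_\bullet$, that these are the vertices of an MV polytope for $G^\st$'' is a forward reference to Theorem~\ref{polytope}, whose proof \emph{uses} Lemma~\ref{lem:pseudo}, so this is circular. Fortunately the step is unnecessary: once you have shown that $(\mu_w)_{w\in W^\st}$ satisfies the pseudo-Weyl inequalities for $G^\st$, the identification of the half-space polytope with the convex hull of the $\mu_w$ is pure convex geometry (this is Proposition~2.2 in \cite{K1}, not Lemma~2.3) and does not require nonemptiness of the GGMS stratum. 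Drop the reference to GGMS nonemptiness and cite the correct result from \cite{K1}, and your proof is complete.
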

\begin{proof}
Since $P$ is $\st$-invariant, we have $\st(\mu_w)=\mu_w$, for $w\in
W^\st$. We can easily see that $\st$ acts trivially on
$\text{\rm{Conv}}((\mu_w)_{w\in W^\st})$, so
$\text{\rm{Conv}}((\mu_w)_{w\in W^\st})\subset P^\st$.

For the converse. The perfect pairing $(X_*\otimes\mathbb{R}) \times
(X^*\otimes \mathbb{R}) \rightarrow \mathbb{R}$ descends to
$(X_*^\st\otimes \mathbb{R}) \times (X^*_\st\otimes
\mathbb{R})\rightarrow \mathbb{R}$ (see Section \ref{section 2.2}).
Note that $t_\mathbb{R}^\st$ can be identified with $X_*^\st\otimes
\mathbb{R}$.

For any $ \beta \in P^\st\subset P$, and $w\in W^\st$, we have
$\langle\beta,
w\D\lambda_i\rangle\leq\langle\mu_w,w\D\lambda_i\rangle$. By
descent, we have $\langle\beta, w\D\lambda_\eta\rangle
\leq\langle\mu_w,w\D\lambda_\eta\rangle$, for all orbit $\eta$ of
$\st$ in $I$, where $\lambda_\eta$ is the fundamental weight for
$G^\st$ corresponding to $\lambda_i$, for $i\in I$. Since
$P^\st\subset t_\mathbb{R}^\st$, we see that
$$P^\st\subset \{\beta \in t_\mathbb{R}^\st| \langle\beta,
w\D\lambda_\eta\rangle\leq \langle\mu_w,w\D\lambda_\eta\rangle,
\forall\text{ } \eta, \forall\text{ } w\in W^\st \}.$$

The right hand side is exactly $\text{\rm{Conv}}((\mu_w)_{w\in
W^\st})$.

\end{proof}

\begin{thm}\label{polytope}
If $P$ is a $\st$-invariant MV polytope for $G$, then $P^\st$ is an
MV polytope for $G^\st$.
\end{thm}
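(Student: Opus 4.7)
My plan is to identify $P^\st$ with the MV polytope of $G^\st$ built from a Lusztig datum extracted from that of $P$. Applying Proposition \ref{prop:invariant}, I would choose a reduced word $\bold{i}$ for $w_0$ that is obtained by concatenating fixed reduced expressions of the $s_{\eta_k}$ inside a reduced expression $\bold{i}_\st = (\eta_1,\ldots,\eta_M)$ of $w_0$ relative to $W^\st$, so that the $\bold{i}$-Lusztig datum $n_\bullet$ of $P$ has the block form $(\bar n_1,\ldots,\bar n_1,\bar n_2,\ldots,\bar n_M,\ldots,\bar n_M)$ with $\bar n_k$ repeated $r_{\eta_k}$ times. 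Setting $n^\st_\bullet := (\bar n_1,\ldots,\bar n_M)$, Kamnitzer's theory applied to $G^\st$ produces, via the map $\tau_{\bold{i}_\st}$, an MV polytope $Q$ of $G^\st$ having $\bold{i}_\st$-Lusztig datum $n^\st_\bullet$. It then suffices to prove $Q = P^\st$.

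By Lemma \ref{lem:pseudo}, $P^\st = \text{\rm{Conv}}\bigl((\mu_w)_{w\in W^\st}\bigr)$, where $(\mu_w)_{w\in W}$ are the vertices of $P$. The vertices of $Q$ are built inductively in $X_*^\st$ via
\[
\mu^Q_{w_k^{\bold{i}_\st}} = \mu^Q_{w_{k-1}^{\bold{i}_\st}} - \bar n_k\, w_{k-1}^{\bold{i}_\st}(\alpha_{\eta_k}^\vee),
\]
whereas on the $G$ side $\mu_{w_k^{\bold{i}_\st}}$ is reached from $\mu_{w_{k-1}^{\bold{i}_\st}}$ by $r_{\eta_k}$ consecutive steps along the chosen reduced expression of $s_{\eta_k}$, each contributing $-\bar n_k\,w_{k-1}^{\bold{i}_\st}(\beta^\vee)$ with $\beta^\vee$ ranging over the positive coroots of the rank-one or rank-two root subsystem $R_{\eta_k}$. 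Matching these two inductive procedures reduces to the coroot identity
\[
\sum_{\beta \in R^+_\eta} \beta^\vee = \alpha_\eta^\vee,
\]
which I would verify in each of the two possible shapes of $R_\eta$: for $A_1 \times \cdots \times A_1$ both sides equal $\sum_{i \in \eta} \alpha_i^\vee$ (and $h = 0$), while for $A_2$ both sides equal $2(\alpha_i^\vee + \alpha_j^\vee)$ (and $h = 1$), in agreement with the normalization $\alpha_\eta^\vee = 2^h \sum_{i\in\eta} \alpha_i^\vee$ from Section \ref{section 2.2}.

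Once the vertices are identified as $\mu^Q_w = \mu_w$ for all $w \in W^\st$, we get $Q = \text{\rm{Conv}}\bigl((\mu_w)_{w\in W^\st}\bigr) = P^\st$, and the theorem follows because $Q$ is an MV polytope of $G^\st$ by construction. The main delicacy I anticipate is the bookkeeping that compresses the $r_{\eta_k}$-step $G$-walk into the single-step $G^\st$-walk; this is handled by the coroot identity above together with the $\st$-invariance of $w_{k-1}^{\bold{i}_\st}$, which ensures that each $w_{k-1}^{\bold{i}_\st}(\alpha_{\eta_k}^\vee)$ already lies in $X_*^\st$ so that the recursion genuinely takes place inside $X_*^\st$ as required.
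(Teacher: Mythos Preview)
Your reduction to a single coroot identity is elegant and the identity itself is correct, but the argument has a real gap. The recursion you write down only identifies $\mu^Q_{w^{\bold{i}_\st}_k} = \mu_{w^{\bold{i}_\st}_k}$ along the chosen path $e = w^{\bold{i}_\st}_0, w^{\bold{i}_\st}_1,\ldots, w^{\bold{i}_\st}_M = w_0$. That is not enough to conclude $Q = P^\st = \text{Conv}((\mu_w)_{w\in W^\st})$, because you need $\mu^Q_w = \mu_w$ for \emph{every} $w\in W^\st$, and $P^\st$ is not yet known to be an MV polytope (that is exactly what you are proving), so you cannot invoke the fact that an MV polytope is determined by its Lusztig datum along one path. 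Repeating your computation for each reduced word $\bold{i}'_\st$ shows that $(\mu_w)_{w\in W^\st}$ satisfies the edge recursions with non-negative data $(n')^\st_\bullet$, but it does \emph{not} tell you that these compressed data are related by the $G^\st$-Lusztig transforms $R^{\bold{i}'_\st}_{\bold{i}_\st}$; equivalently, it does not verify the tropical Pl\"ucker relations for $G^\st$. Without that, the various $Q'$ you would build from different $\bold{i}'_\st$ need not coincide with $Q$, and the argument stalls.

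The paper sidesteps this difficulty by working geometrically. Using Lemma~\ref{fixed} to identify $A^{\bold{i}}(n_\bullet)^\st$ with $A^{\bold{i}_\st}(n^\st_\bullet)$ and Lemma~\ref{lem:nonempty} to show that the intersections $(A^{\bold{i}}(n_\bullet)\cap A^{\bold{i}'}(n'_\bullet))^\st$ over all $(\bold{i}',n'_\bullet)\in J$ are dense in $A^{\bold{i}}(n_\bullet)^\st$, the paper obtains $\overline{A^{\bold{i}_\st}(n^\st_\bullet)} = \overline{A((\mu_w)_{w\in W^\st})}$ directly. The moment polytope of the left side is $Q$ and that of the right side is $\text{Conv}((\mu_w)_{w\in W^\st}) = P^\st$ by Lemma~\ref{lem:pseudo}, so $Q = P^\st$ follows without ever comparing Lusztig transforms for $G$ and $G^\st$. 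In fact the compatibility $R^{\bold{i}'_\st}_{\bold{i}_\st}(n^\st_\bullet) = (n')^\st_\bullet$ that your approach would need as an input is deduced in the paper only \emph{after} the theorem, as a corollary.
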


\begin{proof}
Let $\mu_\bullet$ be the vertices of $P$. Fix a reduced word
$\bold{i}_\st$ for $G^\st$, and let $n^\st_\bullet$ be the
corresponding $\bold{i}_\st$-Lusztig datum of $P$.

Let $\bold{i}$ be the fixed reduced word for $G$ from
$\bold{i}_\st$, in the sense of Proposition \ref{prop:invariant}.
Let $J=\{(\bold{i}', n'_\bullet)| \text{}\bold{i}'$ is a reduced
word for $G$ from some reduced word $\bold{i}'_\st$ for $G^\st$, and
$R^{\bold{i'}}_{\bold{i}}(n_\bullet)=n'_\bullet\}$. We have
$\cap_{(\bold{i}', n'_\bullet)\in J} A^{\bold{i}'}(n'_\bullet)^\st$
contains an open and dense subset of $A^\bold{i}(n_\bullet)^\st$
from Lemma \ref{lem:nonempty}, since the intersection of finite open
dense subsets is still open and dense.

Recall  $A^{\bold{i}'}(n'_\bullet)=\cap
S_{w^{\bold{i}'}_k}^{\mu_{w^{\bold{i}'}_k}}$, and
$A^{\bold{i}'_\st}(n'^\st_\bullet)=\cap
(S_\st)_{w^{\bold{i}'_\st}_k}^{\mu_{w^{\bold{i}'_\st}_k}}$. By Lemma
\ref{fixed}, we have $(\cap_{(\bold{i}', n'_\bullet)\in J}
A^{\bold{i}'}(n'_\bullet))^\st=\cap_{(\bold{i}'_\st,
n'^\st_\bullet)} A^{\bold{i}'_\st}(n'^\st_\bullet)=A((\mu_w)_{w\in
W^\st})$, where $A((\mu_w)_{w\in W^\st})=\cap_{w\in W^\st}
(S_\st)_{w} ^{\mu_w}$. The last equality holds, since for any $w\in
W^\st$, there exists some reduced word $\bold{i}'_\st$ of $G^\st$
and some integer $k$, such that $w=w^{\bold{i}'_\st}_k$. Therefore,
we have
$\overline{A^{\bold{i}_\st}(n^\st_\bullet)}=\overline{A^{\bold{i}}(n_\bullet)^\st}=\overline{(\cap_{(\bold{i}',
n'_\bullet)\in J}
A^{\bold{i}'}(n'_\bullet))^\st}=\overline{A((\mu_w)_{w\in W^\st})}$.
That means, the moment polytope of the MV cycle
$\overline{A^{\bold{i}_\st}(n^\st_\bullet)}$ is
$\text{\rm{Conv}}((\mu_w)_{w\in W^\st})$, which is exactly $P^\st$,
by Lemma \ref{lem:pseudo}. Hence $P^\st$ is really an MV polytope
for $G^\st$.

\end{proof}
\begin{cor}
Let $(\bold{i},n_\bullet)$ and $(\bold{i'},n'_\bullet)$ be two
$\st$-invariant Lusztig data. If
$R^{\bold{i}'}_{\bold{i}}(n_\bullet)=n'_\bullet$, then
$R^{\bold{i}'_\st}_{\bold{i}_\st}(n^\st_\bullet)=n'^\st_\bullet$
\end{cor}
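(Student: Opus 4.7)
The plan is to use Kamnitzer's density criterion for the Lusztig transform: $R^{\bold{i}'}_{\bold{i}}(n_\bullet) = n'_\bullet$ if and only if $A^{\bold{i}}(n_\bullet) \cap A^{\bold{i}'}(n'_\bullet)$ is dense in $A^{\bold{i}}(n_\bullet)$. Since this criterion is a general fact about reductive groups, applying it to $G^\st$ reduces the corollary to showing that $A^{\bold{i}_\st}(n^\st_\bullet) \cap A^{\bold{i}'_\st}(n'^\st_\bullet)$ is dense in $A^{\bold{i}_\st}(n^\st_\bullet)$.

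First I would translate both sides of the desired intersection from $\G_\st$ back to the ambient affine Grassmannian $\G$ using Lemma~\ref{fixed}. Applied to the $\st$-invariant data $(\bold{i},n_\bullet)$ and $(\bold{i}',n'_\bullet)$, this lemma gives
\[
A^{\bold{i}_\st}(n^\st_\bullet) = A^{\bold{i}}(n_\bullet)^\st, \qquad A^{\bold{i}'_\st}(n'^\st_\bullet) = A^{\bold{i}'}(n'_\bullet)^\st.
\]
Because intersection commutes with taking $\st$-fixed points, the intersection in $\G_\st$ equals $\bigl(A^{\bold{i}}(n_\bullet) \cap A^{\bold{i}'}(n'_\bullet)\bigr)^\st$.

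Next I would invoke Lemma~\ref{lem:nonempty}: since $n_\bullet$ is $\st$-invariant and $R^{\bold{i}'}_{\bold{i}}(n_\bullet) = n'_\bullet$, the fixed-point set $(A^{\bold{i}}(n_\bullet) \cap A^{\bold{i}'}(n'_\bullet))^\st$ is dense in $A^{\bold{i}}(n_\bullet)^\st$. Combining with the identifications above shows that $A^{\bold{i}_\st}(n^\st_\bullet) \cap A^{\bold{i}'_\st}(n'^\st_\bullet)$ is dense in $A^{\bold{i}_\st}(n^\st_\bullet)$, which by the density criterion applied to $G^\st$ yields $R^{\bold{i}'_\st}_{\bold{i}_\st}(n^\st_\bullet) = n'^\st_\bullet$.

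There is no real obstacle here; all the work has already been done in Lemmas~\ref{fixed} and~\ref{lem:nonempty}. The only point worth checking carefully is that Kamnitzer's density characterization of the Lusztig transform is available for $G^\st$ on the same footing as for $G$, but since $G^\st$ is a connected almost simple algebraic group and the characterization is intrinsic to any such group, this is immediate. The corollary is thus a clean formal consequence of the two lemmas via the characterization of $R$ by density of intersections of GGMS strata.
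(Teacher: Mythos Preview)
Your proof is correct and is essentially the argument the paper has in mind: the corollary is stated without proof immediately after Theorem~\ref{polytope}, whose proof already combines Lemmas~\ref{fixed} and~\ref{lem:nonempty} in exactly the way you describe, and the density characterization of $R^{\bold{i}'}_{\bold{i}}$ you invoke is recorded in Section~\ref{section 4.3}. The only cosmetic difference is that the paper packages the conclusion through the polytope $P^\st$ (both $n^\st_\bullet$ and $n'^\st_\bullet$ are Lusztig data of the single MV polytope $P^\st$), whereas you go straight to the density criterion; the content is the same.
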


\begin{thm} \label{main}
We have a bijection $\theta_P: \text{\rm{MVP}}_G ^\st
\longrightarrow \text{\rm{MVP}}_{G^\st}$, given by $P\mapsto P^\st$,
which preserves coweights. Induced from $\theta_P$, we have a
bijection $\theta_C:\text{\rm{MVC}} ^\st _G \longrightarrow
\text{\rm{MVC}}_{G^\st}$
\end{thm}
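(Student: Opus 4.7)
The plan is to leverage Theorem~\ref{polytope} together with the Lusztig datum parametrization of Proposition~\ref{prop:invariant} to prove bijectivity, and then deduce the cycle statement formally. Well-definedness of $\theta_P$ is precisely Theorem~\ref{polytope}. For injectivity and surjectivity I would fix a reduced word $\bold{i}_\st$ for $W^\st$ and the induced reduced word $\bold{i}$ for $W$, as in Proposition~\ref{prop:invariant}. That proposition characterizes $\st$-invariant MV polytopes for $G$ as those whose $\bold{i}$-Lusztig datum has the ``balanced'' shape $n_\bullet=(\bar n_1,\ldots,\bar n_1,\bar n_2,\ldots,\bar n_2,\ldots)$ with each block of length $r_{\eta_k}$, so the projection $n_\bullet \mapsto n^\st_\bullet:=(\bar n_1,\bar n_2,\ldots)$ is a bijection between $\st$-invariant $\bold{i}$-Lusztig data and $\bold{i}_\st$-Lusztig data. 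Since MV polytopes for $G$ (respectively $G^\st$) are parametrized bijectively by $\bold{i}$-Lusztig data (respectively $\bold{i}_\st$-Lusztig data), the composite yields a bijection $\text{\rm{MVP}}^\st_G \leftrightarrow \text{\rm{MVP}}_{G^\st}$.

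The crucial step is to identify this abstract bijection with the geometric map $P \mapsto P^\st$. This is essentially contained in the proof of Theorem~\ref{polytope}: starting from a $\st$-invariant MV polytope $P$ with $\bold{i}$-Lusztig datum $n_\bullet$, that proof shows $\overline{A^{\bold{i}_\st}(n^\st_\bullet)}$ is an MV cycle for $G^\st$ whose moment polytope is exactly $P^\st$. Hence the MV polytope for $G^\st$ associated to $n^\st_\bullet$ coincides with $P^\st$, so $\theta_P$ agrees with the Lusztig-datum bijection and is itself a bijection. Coweight preservation follows because $w_0$ is $\st$-fixed and (by the reduced expression in Proposition~\ref{prop:invariant}) is also the longest element of $W^\st$; the vertex $\mu_{w_0}$ therefore belongs to both $P$ and $P^\st$, and normalizing $\mu_e=0$ shows that the common coweight is $\mu_{w_0}$, which lies in $X_*^\st$ by Lemma~\ref{equiv}.

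Finally, $\theta_C$ is obtained formally: there is a natural bijection between MV cycles and MV polytopes for each of $G$ and $G^\st$, and Lemma~\ref{equiv} says a cycle is $\st$-invariant if and only if its polytope is; composing these identifications with $\theta_P$ gives $\theta_C$, and the explicit description via Lusztig data shows that it sends $\overline{A^{\bold{i}}(n_\bullet)}$ to $\overline{A^{\bold{i}_\st}(n^\st_\bullet)}$. I do not expect a genuine obstacle: the substantive work—showing $P^\st$ is an MV polytope for $G^\st$, the fixed-point identity of Lemma~\ref{fixed}, and the compatibility of Lusztig data under the Lusztig transform—has already been carried out. The only point requiring a brief check is coweight preservation, which reduces to the $\st$-invariance of $w_0$ and the identification of $W^\st$ as the Weyl group of $G^\st$ from Section~\ref{section 2.2}.
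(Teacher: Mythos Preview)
Your proposal is correct and follows essentially the same approach as the paper: both use Theorem~\ref{polytope} for well-definedness, fix the reduced words $\bold{i}_\st$ and $\bold{i}$ as in Proposition~\ref{prop:invariant}, and identify $\theta_P$ with the bijection $n_\bullet\mapsto n^\st_\bullet$ on Lusztig data. The paper phrases injectivity via Proposition~\ref{prop:invariant} and surjectivity via Lemma~\ref{fixed}, whereas you package both into a single identification using the proof of Theorem~\ref{polytope}; your explicit argument for coweight preservation is also a bit more detailed than the paper's ``easy to see,'' but the content is the same.
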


\begin{proof}
Let $P$ be a $\st$-invariant MV polytope for $G$. By Theorem
\ref{polytope}, we have a well-defined map $\theta_P:\text{\rm{MVP}}
^\st _G\longrightarrow \text{\rm{MVP}}_{G^\st}$ by
$\theta_P(P)=P^\st$.

 Fix a reduced word $\bold{i}_\st$ for $G^\st$. Let $\bold{i}$ be
a reduced word coming from $\bold{i}_\st$. For any MV polytope for
$G$ (resp. $G^\st$), we have the corresponding $\bold{i}$ (resp.
$\bold{i}_\st$) Lusztig datum. According to Proposition
\ref{prop:invariant}, $\theta_P$ is injective. Let $Q$ be any MV
polytope for $G^\st$, and let $m_\bullet$ be the
$\bold{i}_\st$-Lusztig datum of $Q$. By Lemma \ref{fixed} and its
proof, there exists a unique $\bold{i}$-Lusztig datum $n_\bullet$
such that $A^{\bold{i}_\st}(m_\bullet)$ is contained in
$A^{\bold{i}}(n_\bullet)$, and $n_\bullet$ is $\st$-invariant. Let
$P_Q$ be the MV polytope of $\overline{A^{\bold{i}}(n_\bullet)}$. We
have $P_Q ^\st=Q$, since $P_Q^\st$ has the same
$\bold{i}_\st$-Lusztig datum as $Q$. So $\theta_P$ is surjective.

Hence $\theta_P$ is a bijection, and it is easy to see that it
preserves the coweights of MV polytopes.
\end{proof}

\subsection{The bijection in highest weight case}\label{section 4.5}

Let $\lambda$, $\mu$ be $\st$-invariant coweights, we set
$X(\lambda, \mu):=S_e ^{\lambda} \cap S_{w_0} ^\mu$, and
$X(\mu-\lambda)=S_e ^0 \cap S_{w_0} ^{\mu-\lambda}$. In this
subsection, we have the same assumptions on the $\bold{i}$ and
$\bold{i}_\st$ as in Subsection \ref{section 4.4}.

The following lemma is given by Anderson\cite{A}
\begin{lem}\label{Anderson}
An irreducible component of $X(\lambda, \mu)$ is contained in
$\overline{\G ^\lambda}$ if and only if it appears as basis in
$V_\mu(\lambda)$
\end{lem}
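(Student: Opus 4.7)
The plan is to deduce the lemma from the classical Mirkovi\'c-Vilonen theorem (in its opposite-cell form) combined with a dimension-counting argument. First, I would invoke the MV theorem: when $\lambda$ is dominant and $\mu$ lies in the weight polytope of $V(\lambda)$, the intersection $\overline{\G^\lambda} \cap S_{w_0}^\mu$ has pure dimension $\langle \rho, \lambda - \mu \rangle$ (with $\rho$ the half-sum of positive roots), and its irreducible components canonically index a basis of the weight space $V_\mu(\lambda)$.

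Next, I would relate these MV basis components to irreducible components of $X(\lambda, \mu) = S_e^\lambda \cap S_{w_0}^\mu$ contained in $\overline{\G^\lambda}$. Given an MV basis component $C \subset \overline{\G^\lambda} \cap S_{w_0}^\mu$, I would check that $C \cap S_e^\lambda$ is open and dense in $C$. The key geometric input is that $t^\lambda$ lies in the closure of $C$ as a $T$-fixed point (it is the ``top'' vertex of the corresponding MV polytope), and among the semi-infinite cells $S_e^{\lambda'}$ stratifying $\G$, only $S_e^\lambda$ contributes a stratum of $C$ of the correct dimension; the others are cut out by the weight-polytope constraint $\lambda' \leq \lambda$ together with dimension bounds. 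Consequently $C \cap X(\lambda, \mu)$ is an irreducible component of $X(\lambda, \mu)$ lying in $\overline{\G^\lambda}$, giving the ``if'' direction.

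For the converse, let $Z$ be an irreducible component of $X(\lambda, \mu)$ contained in $\overline{\G^\lambda}$. By Kamnitzer's computation of the dimensions of generalized MV cycles (see the discussion in Section \ref{section 4.1}), $Z$ has dimension $\langle \rho, \lambda - \mu \rangle$. Hence $\overline{Z} \subset \overline{\G^\lambda} \cap \overline{S_{w_0}^\mu}$ achieves the maximal dimension, forcing $\overline{Z}$ to coincide with one of the MV basis components indexing $V_\mu(\lambda)$. The main obstacle I anticipate is the density assertion in the forward direction: one must carefully verify that, for $C$ an MV basis cycle in $\overline{\G^\lambda} \cap S_{w_0}^\mu$, the intersection with the opposite-type cell $S_e^\lambda$ is nonempty and open dense. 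This requires a precise understanding of how $T$-fixed points and the two opposite families of semi-infinite cells interact inside $\overline{\G^\lambda}$, for which one uses the Bialynicki-Birula style decompositions coming from generic one-parameter subgroups of $T$.
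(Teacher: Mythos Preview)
The paper does not give its own proof of this lemma; it simply attributes the result to Anderson \cite{A} and uses it as a black box. So there is no ``paper's proof'' to compare against.

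Your sketch is essentially correct and is in fact the standard argument (close to what Anderson does). Two remarks. First, the density step you flag as the ``main obstacle'' is easier than you suggest: once you know the equidimensionality of generalized MV cycles---that every component of $S_e^{\nu}\cap S_{w_0}^{\mu}$ has dimension $\langle\rho,\nu-\mu\rangle$---the fact that an MV basis component $C\subset\overline{\G^\lambda}\cap S_{w_0}^{\mu}$ meets $S_e^{\lambda}$ in a dense open follows immediately from stratifying $C$ by the $S_e^{\nu}$ with $\nu\leq\lambda$ and comparing dimensions; no delicate Bialynicki--Birula analysis is needed. Second, the equidimensionality result you invoke for the converse is originally due to Anderson (not Kamnitzer), so your citation to ``Kamnitzer's computation'' is slightly off, though Kamnitzer's description of MV cycles via Lusztig data certainly recovers it.
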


First of all, we have a decomposition:
\begin{equation}\label{LD}
X(\lambda,\mu)=\lambda\cdot X(\mu-\lambda)=\bigsqcup \lambda\cdot
A^\bold{i}(n_\bullet),
\end{equation}
where the union is taken over $n_\bullet$ which are
$\bold{i}$-Lusztig data with coweight $\mu-\lambda$. Then
\begin{equation}\label{decomp0}
S_e^{\lambda} \cap S_{w_0}^{\mu}\cap \overline{\G
^\lambda}=\bigsqcup_1 \lambda\cdot A^\bold{i}(n_\bullet)  \cup
\bigsqcup_2 (\lambda\cdot A^\bold{i}(n_\bullet)\cap \overline{\G
^\lambda}),
\end{equation}
where the first union 1 is taken over those $n_\bullet$ in
(\ref{LD}) such that $\lambda\cdot A^\bold{i}(n_\bullet)\subset
\overline{\G ^\lambda}$; the second union 2 is taken over those
$n_\bullet$ in (\ref{LD}) such that $\lambda\cdot
A^\bold{i}(n_\bullet)\nsubseteq \overline{\G ^\lambda}$.

If $\lambda\cdot A^\bold{i}(n_\bullet)\nsubseteq \overline{\G
^\lambda}$, then $\lambda\cdot A^\bold{i}(n_\bullet)\cap
\overline{\G ^\lambda}$ is of lower dimension than
$A^\bold{i}(n_\bullet)$.

From decomposition (\ref{decomp0}) and Corollary \ref{nofixed}, we
have
\begin{equation}\label{decom1}
(S_e^{\lambda} \cap S_{w_0}^{\mu}\cap \overline{\G ^\lambda})^\st
=(S_e^{\lambda})^\st \cap (S_{w_0}^{\mu})^\st\cap (\overline{\G
^\lambda})^\st=\bigsqcup_3 \lambda\cdot A^\bold{i}(n_\bullet)^\st
\cup \bigsqcup_4 (\lambda\cdot A^\bold{i}(n_\bullet)\cap
\overline{\G ^\lambda})^\st,
\end{equation}
where the first union 3 is taken over those $n_\bullet$ in
(\ref{LD}), such that $\lambda\cdot A^\bold{i}(n_\bullet) \subset
\overline{\G ^\lambda}$ and $n_\bullet$ is $\st$-invariant; the
second union 4 is taken over those $n_\bullet$ in (\ref{LD}), such
that $\lambda\D A^\bold{i}(n_\bullet)\nsubseteq \overline{\G
^\lambda}$ and $n_\bullet$ is $\st$-invariant. From the point view
of $G^\st$, we also have a decomposition

\begin{equation}\label{decomp2}
(S_\st)_e^{\lambda} \cap (S_\st)_{w_0}^{\mu} \cap (\overline{\G
^\lambda})^\st=\bigsqcup_5 \lambda \cdot A^{\bold{i}_\st}(m_\bullet)
\cup \bigsqcup_6 (\lambda\cdot A^{\bold{i}_\st}(m_\bullet)\cap
\overline{\G_\st ^\lambda}),
\end{equation}
 where the
first union 5 is taken over $m_\bullet$ which are
$\bold{i}_\st$-Lusztig data with coweight $\mu-\lambda$, satisfying
$\lambda\cdot A^{\bold{i}_\st}(m_\bullet)\subset \overline{\G_\st
^\lambda}$; the second union 6 is taken over $m_\bullet$ which are
$\bold{i}_\st$-Lusztig data with coweight $\mu-\lambda$, satisfying
$\lambda\cdot A^{\bold{i}_\st}(m_\bullet)\nsubseteq \overline{\G_\st
^\lambda}$.

If $\lambda\cdot A^{\bold{i}_\st}(m_\bullet)\nsubseteq \overline{\G
^\lambda}$, then $\lambda\cdot A^{\bold{i}_\st}(m_\bullet)\cap
\overline{\G_\st ^\lambda}$ is of lower dimension than
$A^{\bold{i}_\st}(m_\bullet)$.

\begin{lem}
  $\overline{\G ^\lambda}=\overline{\cap S_w ^{w\cdot\lambda}}$.
\end{lem}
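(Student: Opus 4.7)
Plan: The strategy is to show that both $\overline{\G^\lambda}$ and $\overline{\cap_w S_w^{w\cdot\lambda}}$ are MV cycles sharing the same MV polytope, namely the Weyl polytope $\text{\rm{Conv}}(W\cdot\lambda)$, and then invoke the bijection between MV cycles and MV polytopes recalled earlier. First, I verify the GGMS-compatibility condition $\mu_v\leq_w \mu_w$ for the family $\mu_w:=w\cdot\lambda$: this unravels to $w^{-1}v\lambda\leq\lambda$, which holds because $\lambda$ is dominant and hence maximal in its Weyl orbit under $\leq$. Thus $\cap_w S_w^{w\cdot\lambda}$ is a GGMS stratum.

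Next I identify $\overline{\G^\lambda}$ itself as an MV cycle whose MV polytope is the Weyl polytope. By Lemma~\ref{Anderson}, the MV cycles of coweight $(\lambda,w_0\cdot\lambda)$ contained in $\overline{\G^\lambda}$ form a basis of $V_{w_0\cdot\lambda}(\lambda)$, a one-dimensional weight space; hence there is a unique such cycle in $\overline{\G^\lambda}$. Since each MV cycle of this coweight has dimension $\langle\lambda,2\rho\rangle=\dim\overline{\G^\lambda}$, this unique cycle must equal $\overline{\G^\lambda}$ itself, so $\overline{\G^\lambda}$ is an MV cycle. Its MV polytope coincides with its moment polytope, which is the convex hull of its $T$-fixed points; the latter are exactly $\{t^\nu:\nu^+\leq\lambda\}$, whose convex hull equals $\text{\rm{Conv}}(W\cdot\lambda)$ by the standard fact that the integral points of the Weyl polytope are precisely the coweights dominated by $\lambda$ in the Weyl sense. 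Hence the Weyl polytope is an MV polytope.

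Finally, Kamnitzer's criterion quoted in the excerpt then implies that $\overline{\cap_w S_w^{w\cdot\lambda}}$ is also an MV cycle (its candidate polytope being the Weyl polytope, now known to be an MV polytope), and its MV polytope is $\text{\rm{Conv}}((w\cdot\lambda)_w)=\text{\rm{Conv}}(W\cdot\lambda)$. The bijection between MV cycles and MV polytopes then forces $\overline{\G^\lambda}=\overline{\cap_w S_w^{w\cdot\lambda}}$. The main obstacle lies in the second step: identifying $\overline{\G^\lambda}$ as an MV cycle through Anderson's lemma together with the one-dimensionality of the extremal weight space $V_{w_0\cdot\lambda}(\lambda)$, and extracting its MV polytope from the description of its $T$-fixed points. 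These are standard but nontrivial inputs from Mirkovi\'c--Vilonen theory.
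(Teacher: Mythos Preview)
Your proof is correct, but the paper takes a shorter and more direct route. The paper simply asserts that $\overline{\cap_w S_w^{w\cdot\lambda}}$ is an MV cycle with coweight $(\lambda,w_0\cdot\lambda)$ and that it is contained in $\overline{\G^\lambda}$; since both varieties are irreducible of dimension $2\langle\lambda,\rho\rangle$, the containment is an equality. No moment-polytope computation or appeal to the MV-cycle/MV-polytope bijection is needed once containment and the dimension count are granted.

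Your approach instead goes through the bijection: you first establish that $\overline{\G^\lambda}$ is itself an MV cycle (via Lemma~\ref{Anderson} and one-dimensionality of $V_{w_0\cdot\lambda}(\lambda)$), then identify its moment polytope as $\mathrm{Conv}(W\cdot\lambda)$, then feed this back into Kamnitzer's criterion to certify that $\overline{\cap_w S_w^{w\cdot\lambda}}$ is an MV cycle with the same polytope. This is a legitimate alternative, and it has the virtue of making explicit \emph{why} the GGMS closure is an MV cycle (a point the paper leaves to ``we know''). The cost is that you invoke more machinery---the description of $T$-fixed points in $\overline{\G^\lambda}$ and the characterisation of the Weyl polytope's lattice points---where the paper gets by with a containment and a dimension comparison. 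Both arguments ultimately rest on the equidimensionality of MV cycles; the paper exploits it through irreducibility, you exploit it through the polytope bijection.
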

\begin{proof}
 We know $\overline{\cap S_w ^{w\cdot\lambda}}$ is an MV cycle with coweight $(\lambda,w_0\D\lambda)$, and it is contained
 in  $\overline{\G ^\lambda}$. Since both of them are of the same dimension
 $2\langle \lambda,\rho\rangle$, and both of them are irreducible, we have $\overline{\G ^\lambda}=\overline{\cap S_w
 ^{w.\lambda}}$.
\end{proof}

\begin{lem}\label{lem:dim}
If  $\lambda\cdot A^\bold{i}(n_\bullet)\nsubseteq \overline{\G
^\lambda}$, and $n_\bullet$ is $\st$-invariant, then $(\lambda\cdot
A^\bold{i}(n_\bullet)\cap \overline{\G ^\lambda})^\st$ is of lower
dimension than $A^\bold{i}(n_\bullet)^\st$.
\end{lem}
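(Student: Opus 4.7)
The plan is to transfer the dimension estimate from the $G$-side to the $G^\st$-side, where the irreducibility of the MV cell for $G^\st$ makes the strict dimension drop automatic once we establish a set-theoretic non-containment.

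First, I would combine Lemma \ref{fixed} with the observation that translation by the $\st$-invariant coweight $\lambda$ commutes with $\st$ to obtain $(\lambda\cdot A^\bold{i}(n_\bullet))^\st=\lambda\cdot A^{\bold{i}_\st}(n^\st_\bullet)$, and then invoke Corollary \ref{cor} to identify $(\overline{\G^\lambda})^\st=\overline{\G_\st^\lambda}$. Intersecting these yields
\[
(\lambda\cdot A^\bold{i}(n_\bullet)\cap\overline{\G^\lambda})^\st
=\lambda\cdot A^{\bold{i}_\st}(n^\st_\bullet)\cap \overline{\G_\st^\lambda},
\]
while the comparison dimension is $\dim A^\bold{i}(n_\bullet)^\st=\dim A^{\bold{i}_\st}(n^\st_\bullet)$. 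Since $A^{\bold{i}_\st}(n^\st_\bullet)$ is the image of the irreducible parameter space $B_\st(n^\st_\bullet)$ under the surjection $\pi_{\bold{i}_\st}$ from the proof of Lemma \ref{fixed}, it is itself irreducible, and hence any proper closed subset of $\lambda\cdot A^{\bold{i}_\st}(n^\st_\bullet)$ has strictly smaller dimension. The lemma therefore reduces to establishing the non-containment $\lambda\cdot A^{\bold{i}_\st}(n^\st_\bullet)\nsubseteq\overline{\G_\st^\lambda}$.

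Second, I would translate both the hypothesis and this target into moment-polytope language. An MV cycle is contained in $\overline{\G^\lambda}$ iff all of its $T$-fixed points lie there, iff every vertex of its moment polytope lies in the weight polytope $\text{\rm{Conv}}(W\cdot\lambda)$; the analogous statement holds for $G^\st$ and $W^\st$. By Theorem \ref{polytope} and Lemma \ref{lem:pseudo}, the MV polytope of $\overline{A^{\bold{i}_\st}(n^\st_\bullet)}$ is the $\st$-fixed sub-polytope $P^\st$ of the $G$-polytope $P$ of $\overline{A^\bold{i}(n_\bullet)}$, with vertices indexed by $W^\st$. The hypothesis supplies some $w_*\in W$ with $\lambda+\mu_{w_*}\notin\text{\rm{Conv}}(W\cdot\lambda)$, and the task becomes exhibiting some $w\in W^\st$ with $\lambda+\mu_w\notin\text{\rm{Conv}}(W^\st\cdot\lambda)$.

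The principal obstacle is this last transfer. If $w_*$ can be taken in $W^\st$, the argument closes immediately using the identity $\text{\rm{Conv}}(W\cdot\lambda)\cap(X_*^\st\otimes\R)=\text{\rm{Conv}}(W^\st\cdot\lambda)$ for $\st$-invariant dominant $\lambda$, since then $\mu_{w_*}=\st(\mu_{w_*})$ by Lemma \ref{equiv}. The genuinely difficult situation is when no offending vertex is $\st$-fixed: one must promote the failure of containment to the $\st$-invariant locus. Two complementary strategies seem plausible. One is geometric: average $\mu_{w_*}$ over its $\st$-orbit to produce a $\st$-fixed point of $P$, and combine $\st$-invariance with a facet-by-facet analysis of $\text{\rm{Conv}}(W\cdot\lambda)$ to check the averaged point still exits $\text{\rm{Conv}}(W^\st\cdot\lambda)$. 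The other is combinatorial: use Proposition \ref{prop:invariant} to phrase the polyhedral condition for an MV cycle to lie in $\overline{\G^\lambda}$ directly in terms of the $\st$-invariant Lusztig datum $n_\bullet$, and show that the failure of this condition for $n_\bullet$ forces the failure of the corresponding condition for $n^\st_\bullet$ defining $\overline{\G_\st^\lambda}$. Carrying out either of these reductions rigorously is the technical heart of the argument.
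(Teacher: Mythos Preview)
Your opening reduction is correct and matches the paper's implicit logic: by Lemma~\ref{fixed} and Corollary~\ref{cor}, the claim boils down to producing a $\st$-fixed point of $\lambda\cdot A^\bold{i}(n_\bullet)$ lying outside $\overline{\G^\lambda}$, after which irreducibility of $A^{\bold{i}_\st}(n^\st_\bullet)$ finishes the argument.

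The gap is in your second step. You try to locate a bad vertex indexed by some $w\in W^\st$, and you correctly flag this as the hard part; but neither of your two sketches is carried out, and the averaging strategy is suspect in principle: averaging the $\st$-orbit of a point outside a convex $\st$-stable region can easily land inside it, so there is no reason the averaged vertex should still witness the failure.

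The paper avoids this vertex-chasing entirely. It works instead with the \emph{full} GGMS stratum $A(\mu_\bullet)=\bigcap_{w\in W}S_w^{\mu_w}$, which is an open dense subset of $A^\bold{i}(n_\bullet)$. Using the identity $\overline{\G^\lambda}=\overline{\bigcap_w S_w^{w\cdot\lambda}}$ together with the closure relations for semi-infinite cells, one sees that if $\lambda\cdot A(\mu_\bullet)$ met $\overline{\G^\lambda}$ at all, then $\lambda+\mu_w\leq_w w\cdot\lambda$ for \emph{every} $w\in W$, i.e.\ $P+\lambda\subset\text{Conv}(W\cdot\lambda)$; Anderson's theorem (Lemma~\ref{Anderson}) would then force $\lambda\cdot A^\bold{i}(n_\bullet)\subset\overline{\G^\lambda}$, contradicting the hypothesis. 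Hence $\lambda\cdot A(\mu_\bullet)$ is disjoint from $\overline{\G^\lambda}$ as a whole. Now Lemma~\ref{lem:nonempty} supplies a $\st$-fixed point inside $A(\mu_\bullet)$, and that point is the witness you need. The key move you are missing is this appeal to Anderson's theorem, which upgrades ``some vertex is outside'' to ``the dense open GGMS stratum is entirely outside'' and thereby sidesteps the $W$-versus-$W^\st$ transfer problem.
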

\begin{proof}
With the same reason as in the proof of lemma \ref{fixed}, we can
find an open subset $U\subset B(n_\bullet)$, such that
$\pi_\bold{i}(U)\subset
\cap_{(\bold{i},n_\bullet)}A^\bold{i}(n_\bullet)=\cap_w S_w
^{\mu_w}$ is open in $A^\bold{i}(n_\bullet)$.

 Note that $(\cap \lambda\cdot S_w ^{\mu_w}) \cap \overline{\G^\lambda}$ is empty.
Otherwise, if there exists a point $p\in (\cap \lambda\cdot S_w
^{\mu_w} )\cap \overline{\G^\lambda}$, then
\begin{equation*}
p \in  (\cap \lambda\cdot S_w ^{\mu_w}) \cap \overline{\G^\lambda}
=(\cap \lambda\cdot S_w ^{\mu_w}) \cap \overline{\cap S_w
^{w\cdot\lambda}} \subset (\cap \lambda\cdot S_w ^{\mu_w}) \cap
\overline{ S_w ^{w\cdot\lambda}}.
\end{equation*}
That is, $\forall\text{ } w\in W$, $p$ must be contained in
$\lambda\cdot S_w ^{\mu_w} \cap \overline{S_w ^{w\cdot\lambda}}$.
From $\overline{S_w ^{w\cdot\lambda}}=\bigsqcup_{\mu\leq_w
w\cdot\lambda} S_w ^\mu$, we have $\mu_w+\lambda \leq_w
w\cdot\lambda$. We get that
$\text{\rm{Conv}}(\mu_\bullet)+\lambda\subset
\text{\rm{Conv}}(W\cdot\lambda)$. According to Anderson's theorem on
multiplicity of weight space \cite{A}, we have $\lambda\cdot
\overline{A(\mu_\bullet)}$ is an MV cycle in $V_\mu(\lambda)$. By
Lemma \ref{Anderson}, it is a contradiction to the condition that
$\lambda\cdot A^\bold{i}(n_\bullet)\nsubseteq \overline{\G
^\lambda}$. As in Lemma $\ref{lem:nonempty}$, there exists a point
$p\in$ $\lambda\cdot A^\bold{i}(n_\bullet)$. So $\lambda\cdot
A^\bold{i}(n_\bullet)^\st \cap \overline{\G ^\lambda}^\st$ has lower
dimension than $A^\bold{i}(n_\bullet)^\st$.
\end{proof}

By Lemma \ref{lem:dim}, and by comparing the two decompositions
(\ref{decom1}) and (\ref{decomp2}), we have that the set
$\{A^\bold{i}(n_\bullet)| n_\bullet \text{ is } \st
\text{-invariant}\text{ and is of coweight } \mu-\lambda, \text{ and
} \lambda\cdot A^\bold{i}(n_\bullet)\subseteq \overline{\G ^\lambda}
\}$ is in bijection with the set
$\{A^{\bold{i}_\st}(m_\bullet)|m_\bullet\text{ is of coweight }
\mu-\lambda,\text{ and } \lambda\cdot
A^{\bold{i}_\st}(m_\bullet)\subseteq \overline{\G_\st ^\lambda} \}$,
 by sending $A^\bold{i}(n_\bullet)$ to $A^\bold{i}(n_\bullet)^\st$.
We thus obtain the following theorem.
\begin{thm}\label{highest}
We have a bijection $\theta_C^\lambda:
\text{\rm{MVC}}_G(\lambda)^\st\longrightarrow\text{\rm{MVC}}_{
G_\st}(\lambda)$, which is the restriction of $\theta_C$ in Theorem
\ref{main}.
\end{thm}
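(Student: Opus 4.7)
The plan is to derive Theorem \ref{highest} as a direct consequence of the bijection established in the paragraph immediately preceding it, combined with Anderson's Lemma \ref{Anderson} applied to both $G$ and $G^\st$. I would work one $\st$-invariant coweight $\mu$ at a time and then take the union over all such $\mu$.

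First, I would fix reduced words $\bold{i}_\st$ for $G^\st$ and $\bold{i}$ for $G$ coming from $\bold{i}_\st$ as in Proposition \ref{prop:invariant}. By Lemma \ref{Anderson}, an MV cycle for $G$ appearing in $V_\mu(\lambda)$ is a top-dimensional irreducible component of $S_e^\lambda \cap S_{w_0}^\mu \cap \overline{\G^\lambda}$; equivalently, it has the form $\overline{\lambda\cdot A^\bold{i}(n_\bullet)}$ for some $\bold{i}$-Lusztig datum $n_\bullet$ of coweight $\mu-\lambda$ with $\lambda\cdot A^\bold{i}(n_\bullet)\subseteq \overline{\G^\lambda}$. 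Such an MV cycle is $\st$-invariant exactly when $n_\bullet$ is $\st$-invariant in the sense of Proposition \ref{prop:invariant}. Applying Lemma \ref{Anderson} to $G^\st$, MV cycles for $G^\st$ in $V_\mu(\lambda)$ correspond bijectively to $\bold{i}_\st$-Lusztig data $m_\bullet$ of coweight $\mu-\lambda$ with $\lambda\cdot A^{\bold{i}_\st}(m_\bullet)\subseteq \overline{\G_\st^\lambda}$.

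Next, I would invoke the bijection already proved in the paragraph before the theorem: by comparing the decompositions (\ref{decom1}) and (\ref{decomp2}) and using Lemma \ref{lem:dim} to discard the lower-dimensional terms on both sides, the top-dimensional pieces of $(S_e^\lambda\cap S_{w_0}^\mu\cap \overline{\G^\lambda})^\st$ match up, giving a bijection between the two indexing sets of Lusztig data above via $n_\bullet\mapsto n_\bullet^\st$. By Lemma \ref{fixed}, this bijection sends $\overline{A^\bold{i}(n_\bullet)}$ to $\overline{A^{\bold{i}_\st}(n_\bullet^\st)} = \overline{A^\bold{i}(n_\bullet)^\st}$, which is exactly the restriction of $\theta_C$ from Theorem \ref{main} to the MV cycles living in $V_\mu(\lambda)$.

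Finally, taking the disjoint union over all $\st$-invariant weights $\mu\le\lambda$ gives the desired bijection $\theta_C^\lambda:\text{\rm{MVC}}_G(\lambda)^\st\to \text{\rm{MVC}}_{G^\st}(\lambda)$, and its compatibility with $\theta_C$ is automatic from the construction. The essential content has already been packaged into Lemmas \ref{fixed}, \ref{lem:dim}, and \ref{Anderson}, so the only point that requires care is making sure that the "good" pieces in (\ref{decom1}) and (\ref{decomp2}) — namely those of top dimension — are genuinely in bijection and not merely matched up to lower-dimensional corrections; this is where Lemma \ref{lem:dim} is indispensable. Beyond that observation, no new ideas are needed: Theorem \ref{highest} is a clean corollary of Theorem \ref{main} together with Anderson's characterization.
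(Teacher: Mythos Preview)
Your proposal is correct and follows essentially the same approach as the paper: the paper's argument for Theorem \ref{highest} is precisely the comparison of decompositions (\ref{decom1}) and (\ref{decomp2}) via Lemma \ref{lem:dim}, together with Lemma \ref{Anderson} to identify which pieces correspond to MV cycles in $V_\mu(\lambda)$, and Lemma \ref{fixed} to see that the resulting map is the restriction of $\theta_C$. You have identified the ingredients and assembled them in the same way.
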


\section{Twining character formula}\label{section 5}
Recall that $\text{\rm{Perv}}_{G(\mathcal{O})}(\G)$ is a tensor
category \cite{MV}, and it is easy to see the tensor functor $\st^*$
induced from the action of $\st$ on affine Grassmannian is a tensor
equivalence. From the functoriality of Tannakian formalism
\cite{DM}, we have a natural automorphism $\bar{\st}$ on $G^\vee$.

Fix a $\st$-invariant coweight $\lambda$, and choose an isomorphism
$\phi: IC_\lambda\simeq \st^*(IC_\lambda)$, which is compatible with
the action of $\st$ on MV cycles( as the basis of $V(\lambda)$).
\begin{lem}\label{Tanna}
The action of $\bar{\st}$ on $G^\vee$ is compatible with the natural
action of $\st$ on $V(\lambda)$ induced from $\phi$.
\end{lem}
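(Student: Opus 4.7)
The plan is to unpack the Tannakian construction of $\bar{\st}$ and check directly that the automorphism of $V(\lambda)$ induced from $\phi$ intertwines the $G^\vee$-action with the action twisted by $\bar{\st}$. First I would recall that the fiber functor realizing the geometric Satake equivalence is $F=H^*(\G,-)$, so $G^\vee=\text{Aut}^\otimes(F)$. Since $\st:\G\to\G$ is an algebraic automorphism, pullback gives a canonical tensor isomorphism $\eta:F\xrightarrow{\sim}F\circ\st^*$, and this $\eta$ is precisely what enters the Tannakian definition of $\bar{\st}$: for any $g\in G^\vee$, the component of $\bar{\st}(g)$ on $F(X)$ is $\bar{\st}(g)_X=\eta_X^{-1}\circ g_{\st^*(X)}\circ\eta_X$.

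Next I would analyze the action of $\st$ on $V(\lambda)$ coming from $\phi$. The isomorphism $\phi:IC_\lambda\simeq\st^*(IC_\lambda)$ gives $F(\phi):V(\lambda)\to F(\st^*(IC_\lambda))$; composing with $\eta_{IC_\lambda}^{-1}$ produces the automorphism $\sigma_\lambda\in\text{GL}(V(\lambda))$ that serves as the action of $\st$ on $V(\lambda)$ induced from $\phi$. The hypothesis that $\phi$ is compatible with the $\st$-action on MV cycles is the normalization that pins $\sigma_\lambda$ down uniquely, by fixing how it permutes the MV cycle basis.

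The compatibility then follows from naturality. Since $g\in\text{Aut}^\otimes(F\circ\st^*)$ is a natural transformation, applying it to the morphism $\phi$ gives $g_{\st^*(IC_\lambda)}\circ F(\phi)=F(\phi)\circ g_{IC_\lambda}$. Combining this with the characterizing formula for $\bar{\st}$ above yields $\sigma_\lambda\circ g_{IC_\lambda}=\bar{\st}(g)_{IC_\lambda}\circ\sigma_\lambda$, which is precisely the desired intertwining. The main obstacle I anticipate is the verification that the canonical $\eta$ induced from $\st$ acting on $\G$ is the tensor isomorphism actually governing the Tannakian construction of $\bar{\st}$, and that the MV-cycle normalization of $\phi$ matches the one implicit in using this $\eta$; once this identification is in place, the lemma is a formal consequence of the naturality of tensor automorphisms of $F$.
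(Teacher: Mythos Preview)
Your proposal is correct and follows essentially the same approach as the paper: both derive the intertwining relation $\st\circ\rho_\lambda(g)=\rho_\lambda(\bar{\st}(g))\circ\st$ by applying the Tannakian equivalence to the morphism $\phi$ and invoking naturality. The paper packages this via the commutative square $T\circ\st^*=\tilde{\st}\circ T$ and simply applies $T$ to $\phi$, whereas you unpack the same content explicitly in terms of the fiber functor $F$, the tensor isomorphism $\eta:F\to F\circ\st^*$, and the formula $\bar{\st}(g)_X=\eta_X^{-1}\circ g_{\st^*(X)}\circ\eta_X$; your $\sigma_\lambda=\eta^{-1}\circ F(\phi)$ is exactly what the paper calls $T(\phi)$.
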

\begin{proof}  Let $T$ be the functor from $\text{\rm{Perv}}_{G(\mathcal
{O})}(\G)$ to $\text{\rm{Rep}}(G^\vee)$, such that
$T(IC_\lambda)=(\rho_\lambda, V(\lambda))$, where
$\rho_\lambda:G^\vee\rightarrow GL(V(\lambda))$ is the corresponding
representation.

From $\st^*:\text{\rm{Perv}}_{G(\mathcal {O})}(\G)\rightarrow
\text{\rm{Perv}}_{G(\mathcal {O})}(\G)$, we get
$T(\st^*(IC_\lambda))=(\rho_\lambda\circ \bar{\st}, V(\lambda))$.
Let $\tilde{\st}$ be the functor from $\text{\rm{Rep}}(G^\vee)$ to
$\text{\rm{Rep}}(G^\vee)$, by sending $(\rho_\lambda, V(\lambda))$
to $(\rho_\lambda\circ \bar{\st}, V(\lambda))$. Then we have the
following commutative diagram:
\[ \begin{CD}
@. \text{\rm{ Perv}}_{G(\mathcal {O})}(\G)@>{T}>> \text{\rm{Rep}}(G^\vee) @. \\
@. @VV{\st^*}V @VV{\tilde{\st}}V @.\\
@. \text{\rm{ Perv}}_{G(\mathcal
{O})}(\G)@>{T}>>\text{\rm{Rep}}(G^\vee)
\end{CD} \]
By applying $T$ to $\phi: IC_\lambda\simeq \st^*(IC_\lambda)$, we
obtain an isomorphism $\st=T(\phi): (\rho_\lambda,
V(\lambda))\rightarrow (\rho_\lambda\circ \bar{\st}, V(\lambda))$ in
$\text{\rm{Rep}}(G^\vee)$. In other words, there exists a linear
isomorphism $\st:V(\lambda)\rightarrow V(\lambda)$ satisfying
\begin{equation*}
\st(\rho_\lambda(g)\D v)=(\rho_\lambda\circ \bar{\st})(g)\D
\st(v)=\rho_\lambda(\bar{\st}(g))\D \st(v), (g\in G^\vee, v\in
V(\lambda)).
\end{equation*}
\end{proof}

\begin{thm}
$\bar{\st}$ is a Dynkin automorphism on $G^\vee$.
\end{thm}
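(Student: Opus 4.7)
The plan is to verify the three characterizing properties of a Dynkin automorphism (in the sense of Section~\ref{section 2.2}) for $\bar{\st}$ acting on $G^\vee$: preservation of a maximal torus $T^\vee$, preservation of a Borel $B^\vee \supset T^\vee$, and the induced permutation on simple roots being a Dynkin diagram automorphism. Throughout, I exploit the fact that by Lemma~\ref{Tanna} the automorphism $\bar{\st}$ of $G^\vee$ is realized on each $V(\lambda)$ by the linear map $\st=T(\phi)$, which is compatible with the geometric $\st$-action on the MV cycle basis.

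First I would analyze the action of $\bar{\st}$ on $T^\vee$. Under geometric Satake, $T^\vee$ is the torus with $X^*(T^\vee)=X_*$, and the weight space $V_\mu(\lambda)$ is identified with $H^*(S^\mu \cap \overline{\G^\lambda},IC_\lambda)$, having a basis indexed by the MV cycles of coweight $(\lambda,\mu)$ contained in $\overline{\G^\lambda}$. Since $\st$ preserves $B$ and acts on $X_*$ as $\st$, we have $\st(S^\mu)=S^{\st(\mu)}$, and consequently $\st$ carries the MV cycle basis of $V_\mu(\lambda)$ bijectively onto that of $V_{\st(\mu)}(\lambda)$. By Lemma~\ref{Tanna}, $\bar{\st}$ therefore permutes weight spaces via $\mu \mapsto \st(\mu)$, which shows $\bar{\st}(T^\vee)=T^\vee$ and that $\bar{\st}$ acts on $X^*(T^\vee)=X_*$ exactly as $\st$.

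Next I would show $\bar{\st}$ preserves a Borel $B^\vee$. Pick a $\st$-invariant regular dominant coweight $\lambda$; then the highest weight line of $V(\lambda)$ is spanned by the class of the single MV cycle of coweight $(\lambda,\lambda)$, namely the point $[t^\lambda]$, which is visibly $\st$-fixed. Hence $\st$ preserves the highest weight line, so $\bar{\st}$ preserves its stabilizer, which (as $\lambda$ ranges over regular dominant $\st$-invariant coweights) is the Borel $B^\vee$. The simple roots of $G^\vee$ are the simple coroots $\alpha_i^\vee$ of $G$; by the previous paragraph $\bar{\st}$ sends $\alpha_i^\vee \mapsto \alpha_{\st(i)}^\vee$, and this is a Dynkin diagram automorphism of $G^\vee$ since, by the hypothesis $a_{\st(i)\st(j)}=a_{ij}$ of Section~\ref{section 2.2}, the map $\st$ preserves the Cartan matrix of $G$ and therefore its transpose (the Cartan matrix of $G^\vee$).

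Finally, an automorphism of the connected almost simple $G^\vee$ that preserves $T^\vee$ and $B^\vee$ and induces a diagram permutation on simple roots becomes a pinning-preserving (Dynkin) automorphism after a suitable normalization by an inner automorphism of $T^\vee$. The main obstacle is precisely this normalization: the isomorphism $\phi\colon IC_\lambda \simeq \st^*(IC_\lambda)$ is determined only up to a scalar on each irreducible $V(\lambda)$, so $\bar{\st}$ is canonical only up to $T^\vee$-conjugation; one must rescale $\phi$ (compatibly across irreducibles using the tensor structure) so that $\bar{\st}$ carries a chosen pinning $(T^\vee,B^\vee,x_i^\vee,y_i^\vee)$ of $G^\vee$ to itself, that is, $\bar{\st}(x_i^\vee(a))=x_{\st(i)}^\vee(a)$ and similarly for $y_i^\vee$. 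Once this is arranged, $\bar{\st}$ is a Dynkin automorphism of $G^\vee$ by the definition recalled in Section~\ref{section 2.2}.
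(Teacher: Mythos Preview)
Your first two steps—showing that $\bar{\st}$ preserves $T^\vee$ and $B^\vee$ and induces the permutation $\alpha_i^\vee\mapsto\alpha_{\st(i)}^\vee$ on simple roots—are correct and essentially match the paper's argument. The gap is in your final step. First, $\bar{\st}$ is produced by Tannakian formalism from the tensor auto-equivalence $\st^*$ together with the canonical identification $H^*(\st^*\mathcal{F})\cong H^*(\mathcal{F})$; it does \emph{not} depend on the choice of $\phi$. The isomorphism $\phi$ enters only afterward, in Lemma~\ref{Tanna}, and rescaling it changes the linear map $\st$ on $V(\lambda)$ but not the group automorphism $\bar{\st}$. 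So you cannot absorb a $T^\vee$-factor into a rechoice of $\phi$. Second, your general assertion is not strong enough: knowing $\bar{\st}=\mathrm{Ad}(t)\circ\theta$ for some Dynkin $\theta$ and some $t\in T^\vee$ does \emph{not} imply that $\bar{\st}$ itself preserves a pinning. Writing $\tau(e_i)=c_i\,e_{\st(i)}$ on simple root vectors, a compatible pinning exists iff $\prod_{j\in\eta}c_j=1$ for every $\st$-orbit $\eta$; in particular, for a fixed vertex $i$ one needs $c_i=1$, which is a genuine constraint not supplied by your argument.

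The paper supplies exactly this missing constraint via Lemma~\ref{tau}: applying Lemma~\ref{Tanna} to the adjoint representation $\mathscr{G}^\vee$ and invoking Schur's lemma, one shows that the differential $\tau$ of $\bar{\st}$ equals $c\cdot\st$ as a linear operator (where $\st$ permutes the MV-cycle basis $e_{\alpha^\vee}$), with $c=-1$ in type $A_{2n}$ and $c=1$ otherwise. Thus $c_i=c$ for all $i$. In the non-$A_{2n}$ cases $c_i=1$ and the MV-cycle root vectors already give a $\bar{\st}$-stable pinning; in type $A_{2n}$ every $\st$-orbit has length $2$, so $\prod_{j\in\eta}c_j=(-1)^2=1$ and a compatible pinning can be chosen. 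This is the substantive point your proposal is missing.
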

\begin{proof}
Let $\text{\rm{Vect}}_{X_*}$ be the tensor category of $X_*$-graded
vector spaces. The action of $\st$ on $X_*$ induces an tensor
functor $\st^\circ$ on $\text{\rm{Vect}}_{X_*}$. From
Mirkovic-Vilonen's paper \cite{MV}, we know that there is a tensor
functor $F$ from $\text{\rm{Perv}}_{G(\mathcal{O})}(\G)$ to
$\text{\rm{Vect}}_{X_*}$, and it's easy to see $\st^*$ and
$\st^\circ$ are compatible with $F$.

Applying Tannkian formalism, from $F$ we get the forgetful functor
from $\text{\rm{Rep}}(G^\vee)$ to $\text{\rm{Rep}}(T^\vee)$, where
$T^\vee$ is a torus of $G^\vee$, and $\st^*$, $\st^\circ$ induce
automorphisms on $G^\vee$ and $T^\vee$, respectively. Since $\st^*$
and $\st^\circ$ are compatible with $F$, we have $\bar{\st}$
preserve the torus $T^\vee$, i.e, $\bar{\st}(T^\vee)=T^\vee$. It
induces the action of $\st$ on $X^*(T^\vee)$.

Let $B^\vee$ be the maximal subgroup of $G^\vee$, which stabilizes
the highest weight line $V_{\lambda}(\lambda)$ in $V(\lambda)$, for
every $\st$-invariant dominant weight $\lambda$. It's easy to see
$B^\vee$ is a Borel subgroup of $G$, and contains $T^\vee$;
furthermore, $\st(B^\vee)=B^\vee$.

The coroots of $G$ $\alpha^\vee_i$, $i\in I$, can be viewed as the
roots of $G^\vee$, and $\st$ send the root $\alpha^\vee_i$ to
$\alpha^\vee_{\st(i)}$ automatically, since under the identification
of $X^*(T^\vee)$ and $X_*$, the actions of $\st$ are compatible.

Let $\mathscr{G}^\vee$ be the Lie algebra of $G^\vee$. Let $\tau$ be
the automorphism on $\mathscr{G}^\vee$ induced from $\bar{\st}$.
From the following Lemma \ref{tau}, we know $\tau$ acts trivially on
the simple root space $\mathscr{G}^\vee_{\alpha^\vee_i}$, for $i$
fixed by $\st$. Lift $\tau$ to $\bar{\st}$ on $G^\vee$, then
$\bar{\st}$ act trivially on the root subgroup $U_{\alpha^\vee_i}$
and $U_{-\alpha^\vee_i}$, for $i$, $\st(i)=i$. Hence we are able to
find root subgroup homomorphisms $x^\vee_i: \C\rightarrow G$ and
$y^\vee_i:\C\rightarrow G$, corresponding to $\alpha^\vee_i$ and
$-\alpha^\vee_i$, such that $\st(x^\vee_i(a))=x^\vee_{\st(i)}(a)$
and $\st(y^\vee_i(a))=y^\vee_{\st(i)}(a)$ for all $i\in I$.

Hence  $\st$ is a Dynkin automorphism with respect to a pinning of
$G^\vee$, $(G,T,B,x^\vee_i,y^\vee_i, i\in I)$ .
\end{proof}
Assume the highest root is $\gamma^\vee$, then it is
$\st$-invariant. $\mathscr{G^\vee}$ admits a highest representation
of $G^\vee$ with highest weight $\gamma^\vee$. Assume
$e_{\alpha^\vee}$ is the basis corresponding to the unique MV cycle
in the root space $\mathscr{G}^\vee_{\alpha^\vee}$. By interchanging
MV cycles, we get a linear operator $\st$ on $\mathscr{G}^\vee$,
especially $\st(e_{\alpha^\vee})=e_{\st(\alpha^\vee)}$. Recall
$\tau$ is an automorphism on $\mathscr{G}^\vee$, we have

\begin{lem}{\label{tau}}
As linear operators on $\mathscr{G}^\vee$, if $G^\vee$ is of type
$A_{2n}$, then $\tau=-\st$; otherwise $\tau=\st$.
\end{lem}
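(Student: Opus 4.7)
My plan is to establish the identity up to a global scalar via Schur's lemma, and then fix the scalar by an explicit calculation on the highest root vector.

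First, since $G^\vee$ is almost simple, $\mathscr{G}^\vee$ is irreducible as the adjoint representation, and is isomorphic to $V(\gamma^\vee)$. The operator $\tau=d\bar\st$, being the differential of the group automorphism $\bar\st$, intertwines the adjoint action with its twist: $\tau\circ\operatorname{Ad}(g)=\operatorname{Ad}(\bar\st(g))\circ\tau$ for all $g\in G^\vee$. By Lemma \ref{Tanna} applied to $\lambda=\gamma^\vee$, the operator $\st$ is likewise an intertwiner between $\rho_{\gamma^\vee}$ and $\rho_{\gamma^\vee}\circ\bar\st$. Schur's lemma then forces $\tau=c\cdot\st$ on $\mathscr{G}^\vee$ for some scalar $c\in\C^\times$.

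To identify $c$, I would evaluate both sides on the highest root vector $e_{\gamma^\vee}$. The top weight space $V_{\gamma^\vee}(\gamma^\vee)$ is one-dimensional, spanned by the unique MV cycle in that weight space; this cycle is automatically $\st$-stable, so under our compatible choice of $\phi$ we have $\st(e_{\gamma^\vee})=e_{\gamma^\vee}$. Consequently $c$ is precisely the eigenvalue of $\tau$ on the line $\mathscr{G}^\vee_{\gamma^\vee}$.

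The remaining step is a type-by-type computation of this eigenvalue. Because the pinning of $G^\vee$ from the previous theorem gives $\tau(e_{\alpha^\vee_i})=e_{\alpha^\vee_{\st(i)}}$ on every simple root vector, one writes $e_{\gamma^\vee}$ as a nested bracket of simple root vectors and applies $\tau$ term-by-term. In type $A_m$ this reduces via the $\mathfrak{sl}_{m+1}$ identity $[e_{ij},e_{jk}]=e_{ik}$ to an induction yielding $\tau(e_{i,i+k})=(-1)^{k-1}e_{m+1-i-k,\,m+2-i}$; in particular $\tau(e_{1,m+1})=(-1)^{m-1}e_{1,m+1}$, so $c=-1$ precisely when $m$ is even, i.e.\ for $A_{2n}$. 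For the Dynkin automorphisms in types $D_n$, $D_4$ (triality), and $E_6$, every nontrivial $\st$-orbit on the simple roots consists of pairwise non-adjacent nodes; the reversal of the iterated bracket for $e_{\gamma^\vee}$ then introduces no sign, giving $c=+1$. The main obstacle is the sign verification in type $A_{2n}$, whose essential feature is that $\{n,n+1\}$ is the \emph{only} $\st$-orbit in any simply-laced diagram whose members are adjacent simple roots (so that $\alpha^\vee_n+\alpha^\vee_{n+1}$ is a root), and the odd parity of the relevant commutator reversal forces $c=-1$ in that case while no such adjacent orbit exists in the other types.
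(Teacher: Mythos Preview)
Your argument has a circularity problem. You invoke ``the pinning of $G^\vee$ from the previous theorem'' to claim $\tau(e_{\alpha^\vee_i})=e_{\alpha^\vee_{\st(i)}}$ on simple root vectors, but in the paper that theorem is proved \emph{using} Lemma~\ref{tau}: the statement that $\tau$ acts trivially on $\mathscr{G}^\vee_{\alpha^\vee_i}$ for $\st(i)=i$, which is what allows one to choose a $\bar\st$-compatible pinning, is deduced from the present lemma. At the point where Lemma~\ref{tau} must be proved, all you know from Schur is $\tau=c\cdot\st$, hence only $\tau(e_{\alpha^\vee_i})=c\,e_{\alpha^\vee_{\st(i)}}$. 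Feeding this into your nested-bracket computation for $e_{\gamma^\vee}$ produces an extra factor $c^{\mathrm{ht}(\gamma^\vee)}$, and the resulting equation $c=(-1)^{m-1}c^{m}$ (in type $A_m$) does not by itself pin down $c$.

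The paper closes this gap differently. It first shows $c^2=1$ by applying $\tau$ to $[e_{\gamma^\vee},e_{-\gamma^\vee}]\in\C\cdot\gamma\subset\mathscr{H}^\vee$, where $\tau$ is already known to act as $\st$ (via the identification $\mathscr{H}^\vee\cong X^*\otimes\C$). For $A_{2n}$ it then looks not at the highest root but at the \emph{shortest} $\st$-invariant positive root $\alpha^\vee_i+\alpha^\vee_j$ with $\st(i)=j$ adjacent: there $\tau([e_{\alpha^\vee_i},e_{\alpha^\vee_j}])=c^2[e_{\alpha^\vee_j},e_{\alpha^\vee_i}]=-[e_{\alpha^\vee_i},e_{\alpha^\vee_j}]$, while $\st$ fixes the MV basis of that one-dimensional root space, forcing $c=-1$. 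For the remaining types it uses a trace comparison on $\mathscr{H}^\vee$: from $\st([a,b])=[\tau(a),\st(b)]$ one gets $\st(h_i)=c\,h_{\st(i)}$, and the existence of a $\st$-fixed node together with nonnegativity of the trace of $\st$ (a permutation of MV cycles) forces $c=1$. Your highest-root idea can be repaired along these lines, but as written it leans on a normalization that is not yet available.
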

\begin{proof}
Let $\mathscr{H}^\vee$ be the Lie algebra of $T^\vee$. It is a
Cartan subalgebra of $\mathscr{G}^\vee$, and it can be identified
with $X^*\otimes \C$, where the actions of $\tau$ on
$\mathscr{H}^\vee$ and $\st$ on $X^*$ are compatible.

From Lemma \ref{Tanna}, we have $\st([a,b])=[\tau(a), \st(b)]$, for
two arbitrary elements $a$ and $b$ in $\mathscr{G}^\vee$. By Schur's
lemma, we have $\tau=c\cdot \st$, for some constant $c$. Let
$\gamma$ be the corresponding coroot of highest root $\gamma^\vee$,
so it is $\st$-invariant. Since $[e_{\gamma^\vee},
e_{-\gamma^\vee}]\in \C\cdot \gamma$, we have $[e_{\gamma^\vee},
e_{-\gamma^\vee}]=\tau([e_{\gamma^\vee},
e_{-\gamma^\vee}])=[\tau(e_{\gamma^\vee}),\tau(e_{-\gamma^\vee})]=c^2\cdot
[e_{\gamma^\vee}, e_{-\gamma^\vee}]$. Hence $c^2=1$.

If $G^\vee$ is of type $A_{2n}$. There exists two adjacent simple
roots $\alpha^\vee_i$ and $\alpha^\vee_j$, such that $\st(i)=j$, for
$i$ and $j \in I$. Then we have
$\tau([e_{\alpha^\vee_i},e_{\alpha^\vee_j}])=[e_{\alpha^\vee_j},e_{\alpha^\vee_i}]=-[e_{\alpha^\vee_i},e_{\alpha^\vee_j}]$.
Since $\alpha^\vee_i+\alpha^\vee_j$ is also $\st$-invariant, it
forces $c=-1$.

If $G^\vee$ is of other type. Let $h_i=[e_{\alpha^\vee_i},
e_{-\alpha^\vee_i}]$. Since $\st([e_{\alpha^\vee_i}$,
$e_{-\alpha^\vee_i}])=[\tau(e_{\alpha^\vee_i})$,
$\st(e_{-\alpha^\vee_i})]=c\cdot [e_{\alpha^\vee_{\st(i)}}$,
$e_{\alpha^\vee_{-\st(i)}}]$, we have $\st(h_i)=c\cdot h_{\st(i)}$.
Then $\{h_i\}_{i\in I}$ is a basis of $\mathscr{H}^\vee$. Since
there exists $i\in I$, such that $\st(i)=i$, when $G^\vee$ is not of
type $A_{2n}$, it's easy to see $\text{\rm{trace}}(
\st|_{\mathscr{H}^\vee})>0$. Moreover, $\st$ interchanges MV cycles
in $\mathscr{H^\vee}$, so $\text{\rm{trace}}(
\tau|_{\mathscr{H}^\vee})\geq 0$. We thus have $c=1$.

 \end{proof}

\begin{rem} We can give another construction of Dynkin
automorphism on $G^\vee$ which is compatible with the action of
$\st$ on MV cycles, by using Vasserot's explicit construction of the
action of dual group on cohomology of perverse sheaves \cite{V}.
Moreover, this automorphism coincides with the one from Tannakian
formalism.
\end{rem}

Recall that twining character is defined to be
$\text{\rm{ch}}^\st(V(\lambda)):=\sum_{\mu\in P(\lambda)^\st}
\text{\rm{trace}}(\st |_{V_\mu(\lambda)})e^\mu$ for a Dynkin
automorphism $\st$, where $\lambda$ is $\st$-invariant.

\begin{prop}
\begin{equation*}
\text{\rm{ch}}^\st (V(\lambda))=\frac{\sum_{w\in W^\st}
(-1)^{\ell_\st (w)}e^{w(\lambda+\rho)}}{\sum_{w\in W^\st}
(-1)^{\ell_\st (w)}e^{w(\rho)}}.
\end{equation*}
\end{prop}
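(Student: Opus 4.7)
The plan is to derive the formula by equating the twining character with the ordinary character of the irreducible representation $V_{(G^\st)^\vee}(\lambda)$ of the Langlands dual group of $G^\st$, and then invoking the Weyl character formula for $(G^\st)^\vee$. The bridge between the two sides is provided by the geometric content of the paper, namely Lemma \ref{Tanna} together with Theorem \ref{highest}.

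First I would rewrite the trace in the twining character as a fixed-point count. By the convention in the introduction, the action of $\st$ on $V(\lambda)$ is the unique intertwiner $\rho_\lambda\simeq\rho_\lambda\circ\bar\st$ that fixes the highest weight vector. The MV cycles in $V(\lambda)$ form a $T$-eigenbasis refining the weight space decomposition, with the cycles of coweight $\mu$ forming a basis of $V_\mu(\lambda)$. The permutation of MV cycles induced by $\st$ acting on the affine Grassmannian fixes the highest weight cycle $\overline{\G^\lambda}$, and by Lemma \ref{Tanna} (applied with $\phi$ chosen compatibly) this permutation action coincides with the normalized intertwiner. Hence for any $\st$-invariant $\mu$,
\begin{equation*}
\text{\rm{trace}}(\st|_{V_\mu(\lambda)})=\#\{C\in\text{\rm{MVC}}_G(\lambda)^\st\text{ of coweight }\mu\}.
\end{equation*}

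Next, Theorem \ref{highest} furnishes a coweight-preserving bijection $\theta_C^\lambda:\text{\rm{MVC}}_G(\lambda)^\st\to\text{\rm{MVC}}_{G^\st}(\lambda)$. Combined with the geometric Satake correspondence applied to $G^\st$, which identifies MV cycles of coweight $\mu$ in the $\lambda$-representation of $(G^\st)^\vee$ with a basis of the weight space $V_{(G^\st)^\vee}(\lambda)_\mu$, the above fixed-point count equals $\dim V_{(G^\st)^\vee}(\lambda)_\mu$. Summing over $\mu\in X_*^\st$ yields
\begin{equation*}
\text{\rm{ch}}^\st(V(\lambda))=\sum_{\mu}\dim V_{(G^\st)^\vee}(\lambda)_\mu\,e^\mu=\text{\rm{ch}}\bigl(V_{(G^\st)^\vee}(\lambda)\bigr).
\end{equation*}

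The final step is simply the classical Weyl character formula applied to the group $(G^\st)^\vee$, whose Weyl group is $W^\st$ with length function $\ell_\st$, yielding the desired ratio. The main obstacle is really bookkeeping rather than substance: one must confirm that the trace agrees with the honest fixed-point count with positive sign (using that the intertwiner is normalized to fix the highest weight MV cycle, which is automatically $\st$-invariant, avoiding the $A_{2n}$ sign that appears in Lemma \ref{tau} for the Lie algebra), and one must identify the symbol $\rho$ appearing in the displayed formula with $\rho_{(G^\st)^\vee}$, i.e., the half-sum of positive roots of $(G^\st)^\vee$ expressed on $X_*^\st\otimes\mathbb{R}$; this is where the standard identity relating $\rho$ for $G$ and for the folded group is invoked so that the two expressions match.
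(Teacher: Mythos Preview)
Your proposal is correct and follows essentially the same route as the paper's own proof: reduce the twining character formula to the weight-by-weight identity $\text{\rm{trace}}(\st|_{V_\mu(\lambda)})=\dim V^\st_\mu(\lambda)$, obtain the left side as a fixed-point count on MV cycles via Lemma~\ref{Tanna}, match it to the right side via the bijection of Theorem~\ref{highest}, and conclude by the Weyl character formula for $(G^\st)^\vee$. Your write-up is in fact more explicit than the paper's about the normalization and the meaning of $\rho$, but the argument is the same.
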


\begin{proof}
Let $V^\st (\lambda)$ be the irreducible representation of
$(G^\st)^\vee$ with highest weight $\lambda$. By Weyl character
formula for $G^\st$,  we have $ \sum_{\mu\in P(\lambda)^\st} \dim
V^\st_\mu(\lambda)e^{\mu}=\frac{\sum_{w\in W^\st} (-1)^{\ell_\st
(w)}e^{w(\lambda+\rho)}}{\sum_{w\in W^\st} (-1)^{\ell_\st
(w)}e^{w(\rho)}}$.

Comparing with our definition of twining character for $G$, we see
that it is equivalent to show
$\text{\rm{trace}}(\st|_{V_\mu(\lambda)})=\dim V^\st_\mu(\lambda)$,
for any $\mu\in P(\lambda)^\st$. By Lemma \ref{Tanna},
$\text{\rm{trace}}(\st|_{V_\mu(\lambda)}=\sharp(\text{\rm{MVC}}_G
^\mu (\lambda) ^\st)$. Hence our proposition follows from Theorem
\ref{highest}
\end{proof}

\end{document}